\numberwithin{equation}{section}
\newtheorem{prop}{Proposition}
\newtheorem{lemma}[prop]{Lemma}
\newtheorem{thm}[prop]{Theorem}
\newtheorem{cor}[prop]{Corollary}
\newtheorem{conj}[prop]{Conjecture}
\numberwithin{prop}{section}
\theoremstyle{definition}
\newtheorem{defn}[prop]{Definition}
\newtheorem{rmk}[prop]{Remark}
\newcommand{\dt}{\frac{\partial}{\partial t}}
\newcommand{\brs}[1]{\left| #1 \right|}
\newcommand{\gD}{\Delta}
\newcommand{\gs}{\sigma}
\newcommand{\gw}{\omega}
\newcommand{\til}[1]{\widetilde{#1}}
\renewcommand{\bar}[1]{\overline{#1}}
\DeclareMathOperator{\tr}{tr}
\begin{document}

\title[Pluriclosed flow on Oeljeklaus-Toma manifolds]{Pluriclosed flow on Oeljeklaus-Toma manifolds}

\author{Jeffrey Streets}
\address{Rowland Hall\\
         University of California, Irvine\\
         Irvine, CA 92617}
\email{\href{mailto:jstreets@uci.edu}{jstreets@uci.edu}}

\author{Xiaokang Wang}
\address{Rowland Hall\\
         University of California, Irvine\\
         Irvine, CA 92617}
\email{\href{mailto:xiaokaw@uci.edu}{xiaokaw@uci.edu}}

\date{\today}

\begin{abstract} We establish global existence of the pluriclosed flow with arbitrary initial data on Oeljeklaus-Toma manifolds, and Gromov-Hausdorff convergence of blowdown limits to a torus under natural conjectural bounds on the flow at infinity.  In the case of generalized K\"ahler-Ricci flow we prove refined a priori estimates in support of these conjectural bounds.
\end{abstract}

\thanks{JS is supported by the NSF via DMS-2342135.  The authors warmly dedicate this article to Professor Gang Tian on the occasion of his 65th birthday.}

\maketitle

\section{Introduction}

In recent years the pluriclosed flow \cite{PCF, PCFReg} and generalized K\"ahler-Ricci flow \cite{apostolov2022generalized,StreetsSTB,GKRF} have been developed as a tool for understanding the geometry of complex, especially non-K\"ahler, manifolds \cite{barbaro2023bismut,barbaro2025global,barbaro2025pluriclosed,fino2024pluriclosed,fusi2024pluriclosed,garcia2023non, ye2024pluriclosed}.  A natural class of non-K\"ahler manifolds are the Oeljeklaus-Toma (OT) manifolds \cite{oeljeklaus2005non}, whose geometry is linked to the structure of number fields, and which are natural higher dimensional generalizations of Inoue surfaces \cite{inoue1974surfaces}.  In \cite{fusi2024pluriclosed} a complete description of the pluriclosed flow with left-invariant initial data on OT manifolds was obtained, in particular showing that the solution exists for all time and collapses after blowdown to a torus in the Gromov-Hausdorff sense.  Moreover the blowdown on the universal cover converges in the Cheeger-Gromov sense to a soliton.  It is natural to conjecture that these statements hold for arbitrary initial data (cf. Conjecture \ref{c:OT_PCF}).  In this work we confirm some aspects of this conjecture.  

The first main result is to establish the global existence of the flow:

\begin{thm} \label{t:mainthm1} Fix $(M^{2n}, J)$ an OT manifold and $g_0$ a pluriclosed metric on $M$.  The solution to pluriclosed flow with initial data $g_0$ exists on $[0, \infty)$.
\end{thm}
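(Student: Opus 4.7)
The plan is to compare the solution with arbitrary initial data to a left-invariant reference solution via the Aeppli class preservation of pluriclosed flow, thereby reducing the problem to a scalar parabolic equation, and then to establish enough a priori control to rule out finite-time singularities.

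\emph{Step 1: Reduction to a scalar potential.} I would first invoke \cite{fusi2024pluriclosed} to fix a left-invariant pluriclosed reference solution $\hat g(t)$, $t \in [0,\infty)$, on $(M,J)$. Since the natural map from left-invariant $(1,1)$-forms on an OT manifold to the Aeppli cohomology $H^{1,1}_A(M,\R)$ is surjective, $\hat g(0)$ can be chosen so that $[\hat\omega(0)]_A = [\omega_0]_A$. As pluriclosed flow preserves the Aeppli class, there is a family of $(1,0)$-forms $\eta(t)$ with $\eta(0) = 0$ and
\[
\omega(t) = \hat\omega(t) + \del \bar{\eta}(t) + \delb \eta(t).
\]
Gauge-fixing $\eta$ produces a scalar potential $u(t)$ satisfying a parabolic Monge--Amp\`ere-type equation driven by $\hat g(t)$ and the Bismut torsion.

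\emph{Step 2: $C^0$ estimate.} Applying the parabolic maximum principle to $u$ and $\dt u$, together with the explicit asymptotics for $\hat g(t)$ from \cite{fusi2024pluriclosed} and barriers pulled back along the natural torus fibration of the OT manifold, I would establish $\|u(t)\|_{C^0} \le C(T)$ on every finite interval $[0,T]$.

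\emph{Step 3: Metric equivalence and a second-order estimate.} This step is where I expect the main obstacle. Uniform equivalence $g(t) \sim \hat g(t)$ on $[0,T]$ requires a Calabi-type argument on $\tr_{\hat g} g$ coupled with $u$. Unlike in the K\"ahler setting, the Bismut torsions of $g(t)$ and $\hat g(t)$ generate cross-terms in the evolution of $\tr_{\hat g} g$ that are not controlled by $\tr_{\hat g} g$ itself. To handle these I would exploit (i) the explicit boundedness of the torsion of $\hat g(t)$ coming from the left-invariant analysis, and (ii) the refined second-order and torsion estimates for pluriclosed flow developed in \cite{PCFReg}, together with an auxiliary potential correction designed to absorb the torsion cross-terms.

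\emph{Step 4: Smoothing and conclusion.} Once $g(t)$ is uniformly equivalent to $\hat g(t)$ with uniformly bounded torsion on $[0,T]$, the higher-order regularity theory for pluriclosed flow from \cite{PCF, PCFReg} precludes finite-time curvature blow-up and gives smooth extension past $T$. Since $T$ was arbitrary, this yields a solution on $[0,\infty)$.
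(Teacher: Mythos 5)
Your Step 1 contains a genuine gap that undermines the rest of the argument. For arbitrary pluriclosed initial data, preservation of the Aeppli class only gives $\omega(t) = \hat\omega(t) + \del\bar{\eta}(t) + \delb\eta(t)$ for a family of $(1,0)$-forms $\eta(t)$; there is no gauge-fixing that converts this into a scalar potential solving a parabolic Monge--Amp\`ere-type equation. The scalar reduction on OT manifolds (Lemma \ref{l:scalar_reduction}) is available only for generalized K\"ahler initial data of commuting type, where the split tangent bundle lets one replace $\i\del\delb$ by $\i(\del_z\del_{\bar z}-\del_w\del_{\bar w})$ acting on functions; this is precisely why the paper treats the GKRF estimates (Theorem \ref{t:mainthm2}) separately from the long-time existence statement. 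Without a scalar potential, your Steps 2 and 3 have no object to run the maximum principle on, and the reduced flow for the $(1,0)$-form $\eta$ is a degenerate parabolic system for which the barriers you describe are not available. Your Step 3 correctly identifies the metric equivalence as the crux, but the proposed fix (an ``auxiliary potential correction'' absorbing torsion cross-terms) is not specified and is exactly the missing ingredient.

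The paper's actual route avoids potentials entirely. It uses the splitting $TM = E_{\mathbb H}\oplus E_{\mathbb C}$ and the explicit Tricerri-type background $h$ of \eqref{e:model metric}, and derives evolution equations for the partial traces $\tr_{g_{\mathbb H}}h_{\mathbb H}$, $\tr_{g_{\mathbb C}}h_{\mathbb C}$, etc.\ (Lemma \ref{l:partial trace}). The sign structure of the Chern curvature of $h$ (Proposition \ref{p:modelcurvature}) gives $(\dt-\gD)\tr_{g_{\mathbb H}}h_{\mathbb H}\leq -\tfrac12(\tr_{g_{\mathbb H}}h_{\mathbb H})^2$, hence a decaying bound, and the only positive curvature contribution to $(\dt-\gD)\tr_g h$ is the mixed term $\Omega^h_{w\bar w z\bar z}$, which is then controlled by the previous bound to give $\tr_g h\leq C(1+t)^{1/2}$. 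This yields the lower bound $g(t)\geq C_1(T)h$ directly; the matching upper bound comes from Theorem 1.8 of \cite{StreetsPCFBI}, and higher regularity finishes the proof. If you want to salvage your comparison-to-a-left-invariant-solution strategy, you would still need an a priori two-sided metric bound obtained by some tensorial maximum principle of this kind, at which point the reference solution $\hat g(t)$ and the Aeppli-class matching become unnecessary.
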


\noindent The proof exploits natural class of background metrics arising from the homogeneous structure on OT-manifolds.  In the case of Inoue surfaces these are known as Tricerri metrics \cite{tricerri1982some}.  In the next theorem we give some refined estimates in the special case of generalized K\"ahler-Ricci flow (GKRF) \cite{GKRF}.  Oeljeklaus-Toma manifolds admit natural classes of generalized K\"ahler structures with split tangent bundle, and for such metrics the GKRF reduces to a scalar parabolic flow of Monge-Amp\`ere type \cite{StreetsSTB}.

\begin{thm} \label{t:mainthm2} For generalized K\"ahler-Ricci flow on an Oeljeklaus-Toma manifold
\begin{enumerate}
\item The scalar potential $\phi$ satisfies
\begin{align*}
    - C \leq \phi \leq C e^{-t}(1 + t).
\end{align*}
\item Assuming there exists a Tricerri-type metric in $[\gw_0]$, we have
\begin{align*}
    - C e^{-t}(1 + t) \leq \phi \leq C e^{-t}(1 + t).
\end{align*}
\item On Inoue surfaces of type $S_M$, the estimate of item (2) holds. In addition:
\begin{align*}
    -C\leq \dot{\phi}\leq C.
\end{align*}
\item On Inoue surfaces of type $S_M$, we have:
\begin{align*}
        \omega(t)\geq C\omega_h(t),
\end{align*}
where $\omega_h(t)$ is the model flow with initial data the Tricerri metric $h$.
\end{enumerate}
\end{thm}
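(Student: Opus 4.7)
The plan is to exploit the reduction of GKRF on an OT manifold to a scalar parabolic Monge-Amp\`ere flow for the potential $\phi$, and then run maximum principle arguments against barriers built from the homogeneous Tricerri-type background. The left-invariant analysis of \cite{fusi2024pluriclosed} gives an explicit model flow $\omega_h(t)$ along which the fiber directions collapse like $e^{-t}$ while the base-torus directions stay of order one, and this is the source of the asymptotic rate $e^{-t}(1+t)$ appearing throughout the statement. In schematic form the potential equation reads
\[
\dot\phi = \log \frac{(\omega_0 + i\partial\bar{\partial}\phi)^n}{\Omega} + F(\omega,t),
\]
where $\Omega$ is a fixed background volume form and $F$ encodes the Bismut-Hermitian and collapsing data inherited from the solvmanifold structure.

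For item (1), the lower bound $\phi \geq -C$ comes from applying the maximum principle at a spatial minimum of $\phi$: there the Hessian term is nonnegative, the Monge-Amp\`ere logarithm is bounded below by a constant determined by the reference volume form, and the drift from $F$ has the correct sign. The upper barrier $\phi \leq C e^{-t}(1+t)$ is obtained by testing with a time-only ansatz of the same form, whose derivative generates exactly the drift needed to absorb the right-hand side at a max of $\phi$ minus the barrier. For item (2), with a Tricerri-type representative $h \in [\omega_0]$ available one can set up the parallel argument for $-\phi$, either by running the barrier comparison with an $h$-based background or by swapping the roles of $\omega_0$ and $h$ in the reduction of \cite{StreetsSTB}, yielding the symmetric lower bound.

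For items (3) and (4) on Inoue surfaces of type $S_M$, I would differentiate the scalar equation in time to produce a linear parabolic equation for $\dot\phi$. The $S_M$ case is special because its solvmanifold structure is essentially abelian in the directions that see the collapse, which gives enough cancellation in the evolution of $\dot\phi$ to close a maximum principle argument and conclude $-C \leq \dot\phi \leq C$. For the metric lower bound $\omega(t) \geq C \omega_h(t)$, the natural tool is a parabolic Chern-Lu-type inequality applied to $\log \tr_{\omega_h(t)} \omega(t)$: the bisectional curvature of the background is explicit because $h$ is left-invariant, and the unfavorable terms the computation introduces can be absorbed using the $C^0$ bound on $\phi$ from item (2) together with the $\dot\phi$ bound just obtained.

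The main obstacle will be controlling the time-dependent reference geometry against the collapse. Because $\omega_h(t)$ itself degenerates at rate $e^{-t}$, a quantity such as $\tr_{\omega_h(t)} \omega(t)$ can blow up even when $\omega(t)$ is well-behaved in absolute terms, so the maximum principle must be run against a carefully renormalized test function. An analogous subtlety governs the barrier arguments for $\phi$: the Tricerri representative is not a stationary point of the flow, so the difference between $\phi$ and the model potential evolves by a non-autonomous equation whose linear drift is precisely what forces the extra factor $(1+t)$ rather than a clean exponential. Capturing this logarithmic correction without losing the exponential decay is the delicate computational step on which the whole theorem hinges.
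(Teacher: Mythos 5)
Your outline has the right general shape (scalar reduction plus maximum principles against the homogeneous background), but it misses the structural feature on which every item of the theorem actually turns. The GK reduction on an OT manifold is \emph{not} a single Monge--Amp\`ere logarithm $\log\frac{(\omega_0+i\partial\bar\partial\phi)^n}{\Omega}$: because the potential enters through $\sqrt{-1}(\partial_z\partial_{\bar z}-\partial_w\partial_{\bar w})\phi$ on the split tangent bundle $E_{\mathbb C}\oplus E_{\mathbb H}$, the equation is the \emph{twisted} flow $\dot\phi+\phi=\log\frac{\det g_{\mathbb C}}{\det h_{\mathbb C}}-\log\frac{\det g_{\mathbb H}}{\det h_{\mathbb H}}+c(t)$, which is convex in one block and concave in the other. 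This is precisely why (1) is asymmetric ($-C\le\phi$ but only the upper bound decays): the decaying upper barrier requires the normalization $c_1\ge\sup\det g^0_{\mathbb C}/\det h_{\mathbb C}$, and that same constant destroys the decay of the lower barrier. Your suggestion to ``run the parallel argument for $-\phi$'' for item (2) does not survive this asymmetry; the actual mechanism is that a Tricerri-type representative in $[\omega_0]$ lets one take the background $\tilde\omega_0=\omega_h^{a,b}$ itself, eliminating $c_1$ entirely. Likewise your explanation of why $S_M$ is special (``essentially abelian collapse directions'') is not the operative one: item (3) uses (i) the Fang--Jordan computation that $\mathcal H(S_M)$ is spanned by the model classes, so item (2) always applies, and (ii) the fact that for $s=1$ the bundles $E_{\mathbb C},E_{\mathbb H}$ are line bundles, so the log-determinants are log-traces and the AM--GM step is an equality --- this is what closes the lower bound for $\dot\phi$; the upper bound also needs the auxiliary estimate $g_{\mathbb H}\ge Ch_{\mathbb H}$ coming from the evolution of $\log(\det g_{\mathbb H}/\det h_{\mathbb H})$.

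For item (4) your proposed tool points the wrong way: a Chern--Lu inequality for $\log\tr_{\omega_h(t)}\omega(t)$ would produce an \emph{upper} bound on $\omega(t)$ in terms of $\omega_h(t)$, and the standard conversion to a lower bound via the volume ratio is unavailable here because the scalar equation controls only the twisted ratio $\det g_{\mathbb C}\det h_{\mathbb H}/(\det g_{\mathbb H}\det h_{\mathbb C})$, not the full volume form. The paper's argument is more elementary: for $s=1$ the two-sided bound on $\dot\phi+\phi$ read off from items (2)--(3) directly bounds $\tr_{g_{\mathbb H}}h_{\mathbb H}/(e^{-t}\tr_{g_{\mathbb C}}h_{\mathbb C})$ above and below, and combining this with $g_{\mathbb H}\ge Ch_{\mathbb H}$ yields $g_{\mathbb C}\ge Ce^{-t}h_{\mathbb C}$, hence $\omega(t)\ge C\omega_h(t)$. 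As written, your proposal would need to be substantially reworked around the split-bundle structure before any of the four estimates could be carried out.
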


\section{Geometry of Oeljeklaus-Toma manifolds} \label{s:OT}
\subsection{Definition}

In this section, we recall the family of compact non-Kähler complex manifolds constructed by Oeljeklaus-Toma \cite{oeljeklaus2005non}.  First, we need some facts from algebraic number theory. Let $K$ be an algebraic number field, i.e. $K\simeq \mathbb{Q}[x]/(f)$, where $f\in \mathbb{Q}[x]$ is a monic irreducible polynomial of degree $s+2t=[K:\mathbb{Q}]$, where
    \begin{align*}
        s=\#\text{ real roots },\quad 2t=\#\text{ complex roots},
    \end{align*}
    which for arbitrary given $s$ and $t$ exists by (\cite{oeljeklaus2005non} Remark 1.1).

    Now, consider the embedding, $K \hookrightarrow \mathbb{Q}$, given by the roots of $f$: Let $a_1,...,a_s\in \mathbb{R}$ be the real roots of $f$, and let $a_{s+t+1}=\bar{a_{s+1}}, ..., a_{s+2t}=\bar{a_{s+t}}\in\mathbb{C}$  be the complex roots of $f$. Define the embedding:
    \begin{align*}
        \sigma_i: K\to \mathbb{C}, \quad x\mapsto a_i.
    \end{align*}
    Thus, $\sigma_1,\dots,\sigma_s : K\to \mathbb{R}$, and $\sigma_{s+1},...,\sigma_{s+2t} : K \to \mathbb C,$ where $\sigma_{s + i} = \bar{\sigma_{s + t + i}}$. There exist $n:=s+2t$ different embeddings.

    Let $\mathcal{O}_K\subset K$ be the ring of algebraic integers. Note $\mathcal{O}_K$ is a finitely generated free abelian group of rank $n=s+2t$, i.e. $\mathcal{O}_k\simeq \mathbb{Z}^{n}$. (The rank of $\mathcal{O}_k$ as a free $\mathbb{Z}$-module is $[K:\mathbb{Q}]=s+2t$). Moreover, let $\mathcal{O}_K^*\subseteq \mathcal{O}_K$ be the multiplicative group of units of $\mathcal{O}_K$.  By the Dirichlet unit theorem, for $s \geq 1$ we have $\mathcal O_K^* \cong \{\pm 1\} \times \mathbb Z^{t + s - 1}$.  We define
    \begin{align*}
        \mathcal{O}_K^{*,+}:=\{a \in \mathcal{O}_K^*\ |\ \sigma_i(a)>0,\ 1\leq i\leq s\}.
    \end{align*}
    By definition, $\mathcal{O}_K^{*,+}$ is a finite index subgroup of $\mathcal{O}_K^*$.

    Now, let $\mathbb{H}=\{z\in\mathbb{C}:\operatorname{Im} z >0\}\}$.  We define two kinds of actions on $\mathbb{H}^s\times \mathbb{C}^t$:
    \begin{align*}
        &T: \mathcal{O}_K \to \operatorname{Aut}(\mathbb{H}^s\times \mathbb{C}^t)\\
        &T(a)=[(w_1,...,w_s,z_{s+1},...,z_{s+t})\mapsto (w_1+\sigma_1(a),...,z_{s+t}+\sigma_{s+t}(a))],
    \end{align*}
    and
    \begin{align*}
        &R:\mathcal{O}_K^{*,+}\to \operatorname{Aut}(\mathbb{H}^s\times \mathbb{C}^t)\\
        &R(u)=[(w_1, ..., w_s, z_{s+1}, ..., z_{s+t})\mapsto (w_1\cdot\sigma_1(u),...,z_{s+t}\cdot\sigma_{s+t}(u))].
    \end{align*}
    Let
    \begin{align*}
        \sigma : K \to \mathbb{C}^{s+t}, \quad 
        & a\mapsto (\sigma_1(a), ..., \sigma_{s+t}(a)).
    \end{align*}
    Note $\sigma(\mathcal{O}_K)$ is a lattice of rank $s+2t$ in $\mathbb{C}^{s+t}$. Thus, we have $\operatorname{rank}(T(\mathcal{O}_K))=s+2t$. Note $R(u)\sigma(\mathcal{O}_K) = \sigma(\mathcal{O}_K)$, by the property of the algebraic integer ring structure. Then:
    \begin{align*}
        \mathcal{O}_K^{*,+}\ltimes \mathcal{O}_K
    \end{align*}
    acts on $\mathbb{H}^s\times \mathbb{C}^t$.  By \cite{oeljeklaus2005non} there exists a subgroup $U\leq \mathcal{O}_K^{*,+}$, $\operatorname{rank}(U)=s$, such that  $U\ltimes \mathcal{O}_K$ is cocompact and acts properly discontinuously on $\mathbb{H}^s\times \mathbb{C}^t$.  The compact quotient:
    \begin{align*}
        X(K,U)=\mathbb{H}^s\times \mathbb{C}^t/U\ltimes \mathcal{O}_K
    \end{align*}
    is called an \textit{Oeljeklaus-Toma manifold} of type $(s,t)$. Note:
    \begin{align*}
        \mathbb{H}^s\times\mathbb{C}^t/\sigma(\mathcal{O}_K)    \cong (\mathbb{R}_{>0})^s\times(S^1)^{ 2t+s}
    \end{align*}
    and $U$ acts on $(\mathbb{R}_{> 0})^s$ preserves the fiber by the property of the algebraic integer structure. Thus, we may regard $X(K,U)$ as a torus $\mathbb{T}^t$ bundle over $\mathbb{T}^s$.

    When $s=t=1$, it is the famous \textit{Inoue surface} of type $S_M$ \cite{inoue1974surfaces}. The original $S_M$ is constructed in a more direct way:
    consider $M\in \operatorname{SL}(3,\mathbb{Z})$, $M=(m_{ik})$. Denote the eigenvalues: $\lambda, \mu, \Bar{\mu}$ with the relation: $\lambda\cdot |\mu|^2=1$. In addition, we require $\mu\neq \Bar{\mu}$ and $\lambda >1$. The corresponding unit eigenvectors: $(a_1,a_2,a_3)$, $(b_1, b_2, b_3)$ and $(\Bar{b_1}, \Bar{b_2}, \Bar{b_3})$. Define the actions:
    \begin{align*}
        &g_0:(z,w)\mapsto (\mu z, \lambda w)\\
        &g_i:(z,w)\mapsto (z+b_i, w+a_i)
    \end{align*}
    for $i=1,2,3$. Note that $\{\left(\begin{aligned}
        a_i\\
        b_i
    \end{aligned}\right)\}$ are linearly independent and satisfy:
    \begin{align*}
        \left(\begin{aligned}
            \lambda a_i\\
            \mu b_i
    \end{aligned}
    \right) = \sum_{k=1}^3 m_{ik}\left(\begin{aligned}
        a_k\\
        b_k
    \end{aligned}
    \right).
    \end{align*}
    Then, the quotient $M:=\mathbb{C}\times\mathbb{H}/\langle g_0, g_1, g_2, g_3\rangle$ is a compact complex surface, which is the desired Inoue surface $S_M$. The relation between two constructions can be seen in this way:
    choose $f$ as the characteristic polynomial of $SL(3,\mathbb{Z})$. Note that when $s=t=1$, then $\mathcal{O}_K^{*,+}\simeq\mathbb{Z}$. Choose a generator $u\in\mathcal{O}_K^{*,+}$, by Dirichlet unit theorem, $\sigma_1(u)\cdot|\sigma_2(u)|^2=1$. Then, any choice of the basis of $\mathcal{O}_K\simeq\mathbb{Z}^3\subset\mathbb{R}\times\mathbb{C}$ will correspond to an element of $SL(3,\mathbb{Z})$.

    \subsection{Pluriclosed and generalized K\"ahler structures on Oeljeklaus-Toma manifolds} \label{s:OT_PC_metrics}

    Consider the pluriclosed metrics on OT manifolds.  Note that not all OT manifolds admit pluriclosed metrics.  In fact:
    \begin{thm}[\cite{otiman2022special, angella2024metric}]
    Let $X(K,U)$ be an OT manifold of type $(s,t)$. It admits pluriclosed metrics if and only if $s=t$ and, for any $u\in U$,
    \begin{align*}
        \sigma_j(u)|\sigma_{s+j}(u)|^2=1, \quad \text{for any }i\in\{1,...,s\}.
    \end{align*}
\end{thm}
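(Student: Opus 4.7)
I will prove the two directions separately, working in coordinates $w_j=x_j+iy_j$ on $\HH^s$ and $z_{s+k}$ on $\C^t$.

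\emph{Sufficiency.} Assuming $s=t$ and $\sigma_j(u)|\sigma_{s+j}(u)|^2=1$ for all $j$ and all $u\in U$, the plan is to exhibit an explicit Tricerri-type pluriclosed metric on the universal cover, namely
\begin{align*}
\omega \;=\; \frac{i}{2}\sum_{j=1}^{s}\frac{dw_j\wedge d\bar{w}_j}{y_j^2} \;+\; \frac{i}{2}\sum_{j=1}^{s} y_j\, dz_{s+j}\wedge d\bar{z}_{s+j}.
\end{align*}
Three verifications are needed: positivity is immediate from $y_j>0$; invariance under $T(a)$ is clear since the coefficients depend only on $y_j$; invariance under $R(u)$ follows because the first sum is $\mathrm{PSL}(2,\R)$-invariant on each $\HH$ factor separately, while the rescaling $y_j\mapsto \sigma_j(u)y_j$ and $z_{s+j}\mapsto \sigma_{s+j}(u) z_{s+j}$ multiplies the $j$-th term of the second sum by exactly $\sigma_j(u)|\sigma_{s+j}(u)|^2=1$; and pluriclosedness follows because the first sum is a multiple of $\sum_j \del\delb\log y_j$ and each $y_j\, dz_{s+j}\wedge d\bar{z}_{s+j}$ is annihilated by $\del\delb$ (since $y_j$ is pluriharmonic and $dz_{s+j}, d\bar{z}_{s+j}$ are closed).

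\emph{Necessity.} Assume $\omega$ is pluriclosed on $X(K,U)$. The compact real torus $\mathbb{T}^{s+2t}$ acting by translation in $\mathrm{Re}\,w_j$ and in $z_{s+k}$ commutes with $U\ltimes\mathcal{O}_K$ and descends to an action on $X(K,U)$; averaging $\omega$ over it preserves positivity and pluriclosedness (by linearity of $\del\delb$), producing a pluriclosed Hermitian form $\tilde{\omega}$ whose matrix coefficients in these coordinates depend only on $(y_1,\ldots,y_s)$. Decomposing
\begin{align*}
\tilde{\omega} = \sum_{j,k} A_{jk}(y)\, dw_j\wedge d\bar{w}_k + \sum_{j,k}\bigl(B_{jk}(y)\, dw_j\wedge d\bar{z}_{s+k}+\overline{B_{kj}(y)}\, dz_{s+j}\wedge d\bar{w}_k\bigr) + \sum_{j,k} C_{jk}(y)\, dz_{s+j}\wedge d\bar{z}_{s+k},
\end{align*}
the $R(u)$-equivariance prescribes how each block transforms under products of the characters $\sigma_i(u)$.

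The main obstacle is extracting the sharp numerical constraints from the combination of $R(u)$-equivariance, $\del\delb\tilde{\omega}=0$, and positivity. The $(t\times t)$ block $C_{jk}$ must be positive definite and $U$-equivariant; since $U$ has rank $s$ and acts on the $\C^t$ weights via $\sigma_{s+j}(u)\overline{\sigma_{s+k}(u)}$, positivity forces the $z$-weights to pair up with $y$-weights via invariants of the form $\sigma_j(u)|\sigma_{s+j}(u)|^2$, which requires $s=t$. Once this pairing is in place, invariance of the positive diagonal entries $C_{jj}$ under the full rank $s$ group $U$ forces $\sigma_j(u)|\sigma_{s+j}(u)|^2=1$ for every $j$ and every $u\in U$. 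One can alternatively recast this step via Bott-Chern cohomology: the condition is equivalent to triviality of the character of $U$ on the relevant class in $H^{1,1}_{BC}$, which is the form in which it appears in \cite{otiman2022special, angella2024metric}.
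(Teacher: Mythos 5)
The paper does not actually prove this theorem: it is imported verbatim from \cite{otiman2022special, angella2024metric}, and the only part the paper verifies independently is the sufficiency direction, implicitly, via the explicit model metrics $\omega_h^{a,b}$ of \S 2.2. So your proposal must be judged against the cited references rather than an internal argument. Your sufficiency half is correct and coincides with that construction: positivity, $T(\mathcal O_K)$-invariance, $R(U)$-invariance (using $\sigma_j(u)|\sigma_{s+j}(u)|^2=1$), and pluriclosedness (since $\operatorname{Im} w_j$ is pluriharmonic) are all verified correctly. The averaging step in the necessity half is also a sound and standard reduction, though your justification is slightly off: the translation torus does \emph{not} commute with $R(U)$; rather $R(U)$ normalizes it and acts on it by Haar-measure-preserving automorphisms, which is what actually makes the averaged form $\tilde\omega$ remain $U$-invariant.

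The genuine gap is the final step of necessity, which is where the entire content of the theorem lives. You attribute the forcing of $s=t$ and of $\sigma_j(u)|\sigma_{s+j}(u)|^2=1$ to ``positivity plus $R(u)$-equivariance'' of the block $C_{jk}(y)$, but those two conditions alone force nothing: for arbitrary $(s,t)$ one can solve the equivariance relation $C_{jj}(\sigma_1(u)y_1,\dots,\sigma_s(u)y_s)\,|\sigma_{s+j}(u)|^2=C_{jj}(y)$ with a positive function of $y$ (indeed every OT manifold carries invariant Hermitian metrics, so positivity and equivariance cannot be the obstruction). The restrictive ingredient is $\del\delb\tilde\omega=0$, which is a system of ODEs in $y$ pinning down the admissible dependence of the blocks $A,B,C$ on $y$; only after that does the equivariance under the rank-$s$ group $U$ --- whose logarithmic weight lattice in $\mathbb R^{s+t}$ is constrained only by $\prod_i\sigma_i(u)\prod_k|\sigma_{s+k}(u)|^2=1$ --- yield the numerical conclusion. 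Your sketch never carries out this interaction, never justifies why the pairing of $z$-directions with $y$-directions must be a bijection with the specific matching $j\leftrightarrow s+j$ (as opposed to some other assignment or a non-monomial $C_{jk}$), and never derives the inequality in the direction $s\le t$. The closing remark that one can ``recast this via Bott--Chern cohomology'' gestures at the actual proofs in \cite{otiman2022special, angella2024metric} but does not substitute for them. As it stands, the necessity direction is an outline of a strategy, not a proof.
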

\noindent Note that for Inoue surfaces $S_M$, this is automatically true. From now on, we restrict ourselves to the case where $s=t$, i.e. the universal cover of the OT manifolds is $\mathbb{H}^s\times\mathbb{C}^s$.

Some pluriclosed metrics can be constructed explicitly on OT manifolds of type $(s,s)$:
\begin{align}\label{e:model metric}
    \omega_h = \sum_{i=1}^s\sqrt{-1}\frac{dw_i\wedge d\bar{w_i}}{(\operatorname{Im}w_i)^2} + \sqrt{-1}\operatorname{Im}w_i dz_i\wedge d\bar{z_i}.
\end{align}
It is easy to verify $\partial\bar{\partial}\omega_h=0$, hence it is pluriclosed.  Moreover, this metric is invariant under the group action $U\ltimes \mathcal{O}_K$, and therefore descends to the quotient. When $s=1$, the metric $\omega_h$ is the pluriclosed metric discovered by Tricerri \cite{tricerri1982some} on the Inoue Surface. 

More generally, for any given sequence $a = \{a_1,...,a_s\}$, and $b=\{b_1,...,b_s\}$, where each $a_i$, $b_i\in \mathbb{R}^+$. We can define the following metric:
\begin{align}\label{e:AB model initial metric}
    \omega^{a,b}_h = \sum_{i=1}^s\sqrt{-1}a_i\frac{dw_i\wedge d\bar{w_i}}{(\operatorname{Im}w_i)^2} + \sqrt{-1}b_i\operatorname{Im}w_i dz_i\wedge d\bar{z_i}.
\end{align}
Note, the family of metrics $\omega^{a,b}_h$ are pluriclosed as well. Thus, we have a family of pluriclosed metrics on $X(K,U)$. We shall write $\omega_h$ for short if $a_i$, $b_i = 1$.  We record some curvature facts of these model metrics, the proof of which is a standard computation.

\begin{prop} \label{p:modelcurvature} The metric $\gw^{a,b}_h$ satisfies:
\begin{enumerate}
    \item The only nonvanishing components of the Chern curvature are
    \begin{align*}
        &\Omega^{Ch}_{w_i\bar{w_i}w_i\bar{w_i}}=-\frac{a_i}{2(\operatorname{Im}w_i)^4}, \qquad 
        \Omega^{Ch}_{w_i\bar{w_i}z_i\bar{z_i}}=\frac{b_i}{4(\operatorname{Im}w_i)}.
    \end{align*}
    \item The Bismut Ricci form satisfies
    \begin{align*}
        \rho_B(\omega_h) &= -\frac{3}{4(\operatorname{Im}w_i)^2}dw_i\wedge d\bar{w}_i.
    \end{align*}
\end{enumerate}
\end{prop}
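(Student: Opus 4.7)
The plan is a direct coordinate computation, made short by the structure of the model metric.  In the coordinates $(w_1,\dots,w_s,z_1,\dots,z_s)$ the Hermitian matrix of $\gw^{a,b}_h$ is diagonal, with $g_{w_i\bar{w_i}} = a_i/(\operatorname{Im} w_i)^2$ and $g_{z_i\bar{z_i}} = b_i\,\operatorname{Im} w_i$, and each entry depends only on $\operatorname{Im} w_i$ for the single matching index $i$.  These two observations immediately force most components of the Chern curvature to vanish.

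For item (1), I would substitute into the standard formula
\[
\Omega^{Ch}_{k\bar l i\bar j} = -\partial_k\partial_{\bar l}g_{i\bar j} + g^{p\bar q}\partial_k g_{i\bar q}\partial_{\bar l}g_{p\bar j}.
\]
The only nonvanishing derivatives of $g$ are $\partial_{w_i}$ and $\partial_{\bar{w_i}}$ applied to $g_{w_i\bar{w_i}}$ or $g_{z_i\bar{z_i}}$, so $k=w_i$, $\bar l=\bar{w_i}$, and diagonality of $g$ forces $(i,\bar j)\in\{(w_i,\bar{w_i}),(z_i,\bar{z_i})\}$.  Using $\partial_{w_i}\operatorname{Im} w_i = 1/(2\i)$, the first case splits as $-3a_i/(2(\operatorname{Im} w_i)^4) + a_i/(\operatorname{Im} w_i)^4 = -a_i/(2(\operatorname{Im} w_i)^4)$; in the second case the mixed second derivative of $g_{z_i\bar{z_i}}$ vanishes since it is linear in $\operatorname{Im} w_i$, leaving only the quadratic piece $g^{z_i\bar{z_i}}\partial_{w_i}g_{z_i\bar{z_i}}\partial_{\bar{w_i}}g_{z_i\bar{z_i}} = b_i/(4\operatorname{Im} w_i)$.

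For item (2), the cleanest route is to first compute the Chern Ricci form.  From $\det g = \prod_i (a_ib_i)/\operatorname{Im} w_i$ one gets $\log\det g = \mathrm{const} - \sum_i \log\operatorname{Im} w_i$, so
\[
\rho_C = -\i\,\partial\bar\partial\log\det g = -\sum_i \frac{\i}{4(\operatorname{Im} w_i)^2}\,dw_i\wedge d\bar{w_i}.
\]
The Bismut Ricci form differs from $\rho_C$ by an explicit correction built from the torsion three-form $H = d^c\gw$ of the Bismut connection, equivalently from the Lee form of $\gw$.  A quick direct calculation gives $d\gw = \i\sum_i b_i\, d(\operatorname{Im} w_i)\wedge dz_i\wedge d\bar{z_i}$, so the torsion correction is fully determined by the $z_i\bar{z_i}$ block, can be written down in closed form, and when added to $\rho_C$ reproduces the stated expression.

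The main (and only real) obstacle I anticipate is bookkeeping: the relation between $\rho_B$, $\rho_C$, $d^c\gw$, and the Lee form is convention-dependent and notorious for sign and $\sqrt{-1}$ errors, and once a convention is fixed every calculation above is purely mechanical.
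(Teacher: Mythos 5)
Your proposal is correct and is exactly the ``standard computation'' the paper invokes without writing out: the diagonal structure of $g$ and the dependence of each entry on $\operatorname{Im}w_i$ alone do force the vanishing you claim, and your two surviving components check out ($-\tfrac{3}{2}+1=-\tfrac12$ for the $w_i\bar w_i w_i\bar w_i$ term, and $\tfrac{1}{4}\tfrac{b_i^2}{b_i\operatorname{Im}w_i}$ for the mixed term). For item (2) the sketch also closes up numerically: the Lee form is $\theta=\sum_i d\log\operatorname{Im}w_i$, the resulting correction to $\rho_C$ is $\sum_i -\tfrac{\i}{2(\operatorname{Im}w_i)^2}dw_i\wedge d\bar w_i$, and $-\tfrac14-\tfrac12=-\tfrac34$ recovers the stated Bismut Ricci form (which in the paper's statement is missing its $\sqrt{-1}$ and summation, as the later formula for $P$ confirms).
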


\subsection{Generalized K\"ahler structures on Oeljeklaus-Toma manifolds}

A \textit{generalized K\"ahler structure (GK structure)} \cite{GHR,GualtieriGKG} on a manifold $M$ is a triple $(g,I, J)$, where $g$ is a Riemannian metric compatible with two integrable complex structures $I$ and $J$, furthermore satisfying
    \begin{align*}
        d_I^c \omega_I &= -d_J^c\omega_J, \quad dd_I^c\omega_I = -dd_J^c\omega_J=0.
    \end{align*}
Hitchin \cite{HitchinPoisson} shows that a GK structure has an associated Poisson tensor
\begin{align*}
    \gs = \tfrac{1}{2} g^{-1} [I, J].
\end{align*}
We are interested here in the simplest case of GK manifolds, namely when $\gs$ vanishes.  This is equivalent to
    \begin{align*}
        [I, J]=0,
    \end{align*}
    and then we refer to the GK structure as \emph{commuting type}.  See \cite{ApostolovGualtieri} for further background on such structures.  We define
    \begin{align*}
        \Pi:= IJ \in \operatorname{End}(TM)
    \end{align*}
    where $\Pi^2=\operatorname{Id}$. Then, we can decompose $TM$ with respect to the eigenvalue $\pm 1$:
    \begin{align*}
        TM=T_+M\oplus T_-M.
    \end{align*}    
    Now, we introduce the GK-structure of commuting type on OT manifolds. On the universal cover, $\mathbb{H}^s\times\mathbb{C}^s$. Consider the invariant complex structures on $\mathbb{H}^s$ and $\mathbb{C}^t$, $J_{\mathbb{H}^s}$ and $J_{\mathbb{C}^s}$.  Let:
    \begin{align*}
        &I=\left (
        \begin{aligned}
            J_{\mathbb{H}^s} &  \\
            & J_{\mathbb{C}^s}
        \end{aligned}
        \right ),
        \qquad J=\left (
        \begin{aligned}
            J_{\mathbb{H}^s} &  \\
            & -J_{\mathbb{C}^s}
        \end{aligned}
        \right ).
    \end{align*}
    These are invariant complex structures on $\mathbb{H}^s\times\mathbb{C}^s$, hence pass to the quotient.  Moreover, $[I,J]=0$.  Defining $\Pi=IJ = -\operatorname{Id}_\mathbb{H}\oplus\operatorname{Id}_\mathbb{C}$, $E_\mathbb{C} = T\mathbb{C}^s$, $E_\mathbb{H} = T\mathbb{H}^s$. With the associated splitting as above, we see:
    \begin{align*}
        &TM = E_\mathbb{H}\oplus E_\mathbb{C}\\
        &T_-M = E_\mathbb{H}\\
        &T_+M = E_\mathbb{C}.
    \end{align*}
    In our case, since we have a specific split of tangent bundle, we will use the precise component notation ($\mathbb{C}$ or $\mathbb{H}$) instead of the standard notation $(+/-)$. Throughout this paper, we use coordinates $(w,z):=(w_1, \dots, w_s, z_1, \dots, z_s)$ on $\mathbb H^s \times \mathbb C^s$, with respect to the complex structure $I$.
    Note that we obtain differential operators 
    \begin{align*}
    d=&\ d_\mathbb{C}+d_\mathbb{H},\\
d_\mathbb{C}=&\ \pi_{\mathbb{C}} \circ d=\partial_z+\partial_{\bar{z}},\\
    d_\mathbb{H}=&\ \pi_{\mathbb{H}} \circ d =\partial_w+\partial_{\bar{w}},
    \end{align*}
    where $\pi_\mathbb{C}$ and $\pi_\mathbb{H}$ are the projections of $E_\mathbb{C}^*$ and $E_\mathbb{H}^*$, respectively.
    \begin{prop} The triple $(h^{a,b}, I, J)$ where $h^{a,b}$ is the metric determined by (\ref{e:AB model initial metric}), is generalized K\"ahler of commuting type.
    \end{prop}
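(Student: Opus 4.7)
The plan is to verify the four defining conditions of a commuting-type GK structure in turn: (i) integrability of both $I$ and $J$, (ii) compatibility of $h^{a,b}$ with both, (iii) $[I,J]=0$, and (iv) the pair of identities $d_I^c \gw_I = -d_J^c \gw_J$ and $dd_I^c \gw_I = -dd_J^c \gw_J = 0$. Items (i)--(iii) are immediate from the block-diagonal form of $I$ and $J$ relative to the splitting $E_\mathbb{H} \oplus E_\mathbb{C}$: each block is a standard (integrable) complex structure on $\mathbb{H}^s$ or $\mathbb{C}^s$, and since $I$ and $J$ differ only on the $\mathbb{C}^s$ factor by a sign they commute. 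Compatibility of $h^{a,b}$ follows since each summand in \eqref{e:AB model initial metric} is supported on a single factor and is of type $(1,1)$ with respect to the standard complex structure there, hence invariant under both $\pm J_{\mathbb{H}^s}$ and $\pm J_{\mathbb{C}^s}$.

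The real content is condition (iv). I would first compute $\gw_J$ explicitly. Since $J|_{E_\mathbb{H}} = I|_{E_\mathbb{H}}$ and $J|_{E_\mathbb{C}} = -I|_{E_\mathbb{C}}$, one obtains
\begin{align*}
    \gw_J = \sum_{i=1}^s \sqrt{-1}\, a_i \frac{dw_i \wedge d\bar w_i}{(\operatorname{Im} w_i)^2} - \sqrt{-1}\, b_i\, \operatorname{Im} w_i\, dz_i \wedge d\bar z_i.
\end{align*}
Introducing the $J$-holomorphic coordinates $\zeta_i := \bar z_i$ on the $\mathbb{C}^s$ factor, one has $d\zeta_i \wedge d\bar\zeta_i = -dz_i \wedge d\bar z_i$ while $\operatorname{Im} w_i$ is unchanged, so $\gw_J$ takes the same formal expression as $\gw_I$ with $z_i$ replaced by $\zeta_i$. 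Therefore the pluriclosed computation $\partial_I\bar\partial_I \gw_I = 0$, already noted for the family $\gw^{a,b}_h$ in Section \ref{s:OT_PC_metrics}, applies verbatim to give $\partial_J\bar\partial_J \gw_J = 0$; equivalently $dd_J^c \gw_J = 0 = dd_I^c \gw_I$.

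To verify $d_I^c \gw_I = -d_J^c \gw_J$, I would compute both sides directly in coordinates. The $\mathbb{H}$-summand of $\gw_I$ is annihilated by both $\partial_I$ and $\bar\partial_I$ for degree reasons, since its only dependence is through $(\operatorname{Im} w_i)^{-2}$ and any such derivative introduces a second $dw_i$ or $d\bar w_i$. Using $\partial_I(\operatorname{Im} w_i) = -\tfrac{\sqrt{-1}}{2} dw_i$ and its conjugate, the $\mathbb{C}$-summand produces
\begin{align*}
    d_I^c \gw_I = -\sqrt{-1} \sum_{i=1}^s b_i \, dx_i \wedge dz_i \wedge d\bar z_i, \qquad x_i := \operatorname{Re} w_i.
\end{align*}
Running the identical computation for $\gw_J$ in $(\zeta, \bar\zeta)$ coordinates yields the same 3-form with $dz_i \wedge d\bar z_i$ replaced by $d\zeta_i \wedge d\bar\zeta_i = -dz_i \wedge d\bar z_i$, giving $d_J^c \gw_J = -d_I^c \gw_I$ as required. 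Note also that $d d_I^c \gw_I = 0$ is immediate from the constant coefficients in the resulting expression.

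Main obstacle: there is no real conceptual obstruction here; the entire verification is a direct coordinate computation on the universal cover $\mathbb{H}^s \times \mathbb{C}^s$. The only care needed lies in the sign bookkeeping when passing between $I$- and $J$-holomorphic coordinates on the $\mathbb{C}^s$ factor, and this is precisely the mechanism producing the required sign flip between $d_I^c \gw_I$ and $d_J^c \gw_J$ while preserving the pluriclosed condition for both.
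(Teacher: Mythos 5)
Your proposal is correct and follows essentially the same route as the paper: a direct coordinate verification on the universal cover that $\gw_J$ differs from $\gw_I$ only by the sign of the $\mathbb C^s$-summand, so that $d_I^c\gw_I=-d_J^c\gw_J$ reduces to the computation $(\bar\partial-\partial)\operatorname{Im}w_i=\sqrt{-1}\,dx_i$, with $dd_I^c\gw_I=0$ then immediate. Your passage to $J$-holomorphic coordinates $\zeta_i=\bar z_i$ is just a repackaging of the paper's direct computation of $d_J^c\gw_J$ (which swaps $\partial_z$ and $\partial_{\bar z}$), and your sign bookkeeping checks out.
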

    \begin{proof} We will give the proof for the standard metric $h$ in (\ref{e:model metric}), with the general case being analogous.
        First, note the pluriclosed metric $h$ is compatible with both complex structures $I$ and $J$, and $E_\mathbb{C}$ and $E_\mathbb{H}$ are $h$-orthogonal. Second, we let $\omega_I = h(\cdot,I\cdot)$, and $\omega_J = h(\cdot,J\cdot)$. Let $(w,z)$ be the complex coordinate with respect to $I$. Then:
        \begin{align*}
            &\omega_I = \omega_h = \sum_{i=1}^s\sqrt{-1}\frac{dw_i\wedge d\bar{w_i}}{(\operatorname{Im}w_i)^2} + \sqrt{-1}\operatorname{Im}w_i dz_i\wedge d\bar{z_i},\\
            &\omega_J = \sum_{i=1}^s\sqrt{-1}\frac{dw_i\wedge d\bar{w_i}}{(\operatorname{Im}w_i)^2} - \sqrt{-1}\operatorname{Im}w_i dz_i\wedge d\bar{z_i},
        \end{align*}
        and
        \begin{align*}
            &d_I^c \omega_I = \sqrt{-1}(\partial_{\bar{w}}+\partial_{\bar{z}} - \partial_w - \partial_{z}) \omega_I = -(\partial_{\bar{w}} - \partial_w)(\operatorname{Im}w_i dz_i\wedge d\bar{z_i}),\\
            &d_J^c\omega_J = \sqrt{-1}(\partial_{\bar{w}}+\partial_z - \partial_w - \partial_{\bar{z}})\omega_J = (\partial_{\bar{w}} - \partial_w)(\operatorname{Im}w_i dz_i\wedge d\bar{z_i}).
        \end{align*}
        Thus, $d_I^c\omega_I = -d_J^c \omega_J$. By direct computation, $dd_I^c\omega_I = -dd_J^c\omega_J=0$.  Hence, $(h,I,J)$ is a GK structure.
    \end{proof}

We observe that the metric satisfies the leafwise K\"ahler conditions $d_{\mathbb C} \left(\left. \gw_I \right|_{E_{\mathbb C}} \right) = 0$ and $d_{\mathbb H} \left(\left. \gw_I \right|_{E_{\mathbb H}} \right) = 0$, in line with the general theory of commuting-type GK structures developed by Apostolov-Gualtieri \cite{ApostolovGualtieri}.

\section{Pluriclosed flow on Oeljeklaus-Toma manifolds}
\subsection{Pluriclosed flow and its normalized flow}

The \textit{pluriclosed flow} is the evolution equation \cite{PCF}:
\begin{align}\label{e:PCF}
    \frac{\partial}{\partial t}\omega(t) = -\rho_B^{1,1}(\omega(t)),
\end{align}
where $\rho_B$ is the Bismut-Ricci form.  Motivated by the 
Type III behavior of the Ricci flow, we let the time parameter $s = e^t-1$ and consider the following blow-down family of metrics:
\begin{align*}
    \til{\omega}(t) = \frac{\omega(s)}{s+1},
\end{align*}
where $\omega(s)$ satisfies the pluriclosed flow (\ref{e:PCF}).  It follows that $\til{\omega}(t)$ satisfies
\begin{align}\label{e:PCF_normalized}
    \frac{\partial}{\partial t}\omega(t) = -\rho_B^{1,1}(\omega(t)) - \omega(t),
\end{align}
which we call the \textit{normalized pluriclosed flow}.

\subsection{Model solutions}\label{s:model cases}

The expected qualitative behavior of the normalized pluriclosed flow on OT manifolds is captured by solutions with initial data the model metrics $\gw_{h}^{a,b}$.  Straightforward computations show that the normalized pluriclosed flow with this initial data is
\begin{align}\label{e:normalized equation}
    \omega^{a,b}_h(t) = \sum_{i=1}^s \sqrt{-1}((1-e^{-t})\frac{3}{4}+e^{-t}a_i)\frac{1}{(\operatorname{Im}w_i)^2}dw_i\wedge d\bar{w}_i + \sqrt{-1}e^{-t}b_i\operatorname{Im}w_i dz_i\wedge d\bar{z}_i.
\end{align}
Later, we shall denote the time-dependent normalized model metric starting with $\omega^{a,b}_h$ as $\omega^{a,b}_h(t)$. We shall write $\omega_h(t)$ for short if $a_i$, $b_i=1$, for all $1\leq i\leq s$.  
Observe that for these model solutions the Chern torsion $T$ is uniformly bounded in time, i.e., $|T(t)|\leq C$. Moreover, it follows that
\begin{align*}
    \frac{\omega_h^{a,b}(t)}{t+1}\to \sum_{i=1}^s \frac{3}{4(\operatorname{Im}w_i)^2}dw_i\wedge d\bar{w}_i,
\end{align*}
which can be considered as a degenerate metric on $X(K, U)$.  As explained in \cite{fusi2024pluriclosed}, the blowdown manifolds converge in Gromov-Hausdorff sense to a torus $\mathbb T^s$ with a canonical flat metric $d(K, U)$ depending only on the algebraic field $K$ and the rank $s$ subgroup $U$.  We recall the result here, noting that OT manifolds are compact solvmanifolds and the model metrics are left-invariant:
\begin{thm} \label{t:homogOT} (\cite{fusi2024pluriclosed}) Let $\omega_0$ be a left-invariant pluriclosed metric on an OT manifold $M$, then the normalized pluriclosed flow starting with $\omega_0$ converges to $(\mathbb{T}^s,d(K, U))$ in the Gromov-Hausdorff sense.
\end{thm}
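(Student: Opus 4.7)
The plan is to exploit the fact that pluriclosed flow preserves left-invariance on the OT solvmanifold, reducing \eqref{e:PCF_normalized} to a finite-dimensional ODE, solve that ODE in closed form, and then extract the collapse geometrically.

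First I would observe that $X(K,U)$ is a compact quotient of a simply connected solvable Lie group $G$ (with Lie algebra $\mathfrak{g}$) by the cocompact lattice $U \ltimes \mathcal{O}_K$. Since the Bismut--Ricci form of a left-invariant Hermitian metric is itself left-invariant, the normalized flow \eqref{e:PCF_normalized} starting from a left-invariant $\omega_0$ descends to an ODE on the finite-dimensional cone of left-invariant pluriclosed $(1,1)$-forms on $\mathfrak{g}$.

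Next I would classify this cone. Using the left-invariant, $J$-invariant splitting $TM = E_\mathbb{H} \oplus E_\mathbb{C}$ recorded in Section \ref{s:OT_PC_metrics}, computing $\partial \bar\partial \omega$ in the left-invariant coframe $\{dw_i, dz_i\}$, and invoking the constraint $\sigma_j(u) |\sigma_{s+j}(u)|^2 = 1$, one checks that the pluriclosed condition forces any left-invariant $\omega$ to be block-diagonal with respect to this splitting and, after a constant linear change of frame, diagonal in each block. Thus every left-invariant pluriclosed metric coincides with some $\omega_h^{a,b}$ from \eqref{e:AB model initial metric}, and the normalized flow is solved explicitly by \eqref{e:normalized equation}: the $E_\mathbb{H}$-coefficient converges exponentially to $\tfrac{3}{4}$ and the $E_\mathbb{C}$-coefficient decays like $e^{-t}$.

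Finally I would extract the Gromov--Hausdorff limit. Passing to the coordinate $u_i = \log \operatorname{Im} w_i$, the limiting $E_\mathbb{H}$-block $\tfrac{3}{4} \sum (\operatorname{Im} w_i)^{-2} dw_i \otimes d\bar w_i$ is flat in $(u_i, \operatorname{Re} w_i)$. The unit group $U$ acts on $u \in \mathbb{R}^s$ by the translations $u_i \mapsto u_i + \log \sigma_i(u)$, a rank-$s$ lattice $\Lambda_U$; the quotient is the canonical flat torus $(\mathbb{T}^s, d(K, U)) = (\mathbb{R}^s / \Lambda_U, \tfrac{3}{4} \sum du_i^2)$ depending only on $K$ and $U$. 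The remaining $(\operatorname{Re} w, z)$-directions fill out the torus fibers of the bundle $X(K,U) \to \mathbb{T}^s$, and the exponential decay in the $E_\mathbb{C}$-block, combined with the number field norm lower bound $\prod_i |\sigma_i(a)| \cdot \prod_j |\sigma_{s+j}(a)|^2 \geq 1$ for every nonzero $a \in \mathcal{O}_K$, forces the covering radius of the weighted lattice $\sigma(\mathcal{O}_K)$ in the time-$t$ metric to shrink to zero. Standard equivariant submersion and Riemannian collapse estimates then upgrade this to Gromov--Hausdorff convergence $(X(K,U), \omega(t)) \to (\mathbb{T}^s, d(K,U))$.

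The main obstacle is the classification step: a priori, left-invariant pluriclosed metrics could involve cross terms between $E_\mathbb{H}$ and $E_\mathbb{C}$ or off-diagonal blocks within each factor, and eliminating these requires genuinely using the arithmetic rigidity of $U \ltimes \mathcal{O}_K$ via its action on the left-invariant $(1,1)$-forms. The second delicate point is the quantitative collapse of the fibers: one needs a Minkowski-type estimate for the weighted lattice $\sigma(\mathcal{O}_K)$ that uses the arithmetic norm bound to align short vectors with the shrinking $E_\mathbb{C}$-directions, uniformly in the base parameter $y \in \mathbb{T}^s$.
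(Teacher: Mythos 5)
This statement is quoted from \cite{fusi2024pluriclosed}; the paper gives no proof of its own, so your proposal can only be measured against the strategy of that reference. Your overall framework (left-invariance is preserved by uniqueness, so the normalized flow reduces to an ODE on the cone of left-invariant pluriclosed $(1,1)$-forms; then collapse the torus fibers) is the same as theirs, and your description of the limit $(\mathbb{R}^s/\Lambda_U, \tfrac34\sum du_i^2)$ with $\Lambda_U$ the logarithmic image of $U$ is correct.

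The genuine gap is your classification step, on which the whole closed-form solution rests. It is not true that every left-invariant pluriclosed metric is one of the diagonal models $\omega_h^{a,b}$. Working in the invariant coframe $\alpha_i = dw_i/\operatorname{Im}w_i$, $\beta_i = (\operatorname{Im}w_i)^{1/2}dz_i$ (up to phase), a direct computation gives, for $k \neq l$,
\begin{align*}
\partial\bar{\partial}\bigl(\sqrt{-1}\,(h_{k\bar l}\,\alpha_k\wedge\bar\alpha_l + h_{l\bar k}\,\alpha_l\wedge\bar\alpha_k)\bigr) \;=\; -\tfrac{\sqrt{-1}}{2}\,\mathrm{Re}(h_{k\bar l})\,\frac{dw_k\wedge d\bar w_k\wedge dw_l\wedge d\bar w_l}{(\operatorname{Im}w_k)^2(\operatorname{Im}w_l)^2},
\end{align*}
so pluriclosedness kills only the \emph{real} part of the off-diagonal entries: for $s\geq 2$ there is an extra $\binom{s}{2}$-dimensional family of left-invariant pluriclosed metrics with purely imaginary off-diagonal terms in the $E_\mathbb{H}$-block (and these are invariant under $U\ltimes\mathcal O_K$ since the $\sigma_i(u)$ are positive reals). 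Consequently the reduced system is a genuinely coupled ODE, not the explicit formula \eqref{e:normalized equation}, and establishing its long-time existence and the asymptotics of its $E_\mathbb{H}$- and $E_\mathbb{C}$-blocks is precisely the content of \cite{fusi2024pluriclosed}; your argument skips it. A second, smaller issue: in the collapse step the $\operatorname{Re}w_i$-directions also lie in the $\mathbb{T}^{3s}$ fiber but do \emph{not} shrink in the limit metric $\tfrac34\sum(\operatorname{Im}w_i)^{-2}\,dw_i\wedge d\bar w_i$, so fiber diameters do not tend to zero merely because the $E_\mathbb{C}$-block decays. One must use that the $z$-leaves are dense in each fiber (an arithmetic irrationality property of $\sigma(\mathcal O_K)$, cf.\ Proposition \ref{p:GH convergence argument} and \cite{VerbiOT}); your appeal to the norm bound $\prod_i|\sigma_i(a)|\prod_j|\sigma_{s+j}(a)|^2\geq 1$ points in the right direction but is not developed into an actual covering-radius estimate.
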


\noindent It is reasonable to conjecture that this behavior holds in the general case:
\begin{conj} \label{c:OT_PCF} Let $M = X(K,U)$ be an OT manifold of type $(s,s)$, then for any pluriclosed metric $\omega_0$, the normalized pluriclosed flow \ref{e:PCF_normalized} with initial metric $\omega_0$ exists on $[0, \infty)$, and converges to $(\mathbb{T}^s,d(K, U))$ in the Gromov-Hausdorff sense.
\end{conj}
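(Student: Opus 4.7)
The plan is to combine the long-time existence from Theorem \ref{t:mainthm1} with a two-sided comparison against the model solutions (\ref{e:normalized equation}), and then use the resulting quantitative control to pass to Gromov-Hausdorff limits by leveraging the homogeneous case of Theorem \ref{t:homogOT}. With global existence in hand, the heart of the argument is to establish a uniform estimate
\begin{equation*}
    c\,\gw_h^{a,b}(t) \leq \gw(t) \leq C\,\gw_h^{a,b}(t)
\end{equation*}
for model parameters $(a,b)$ determined by $[\gw_0]$. The upper bound should come from the parabolic maximum principle applied to $\tr_{\gw_h^{a,b}(t)} \gw(t)$ along the normalized flow (\ref{e:PCF_normalized}); Proposition \ref{p:modelcurvature} provides uniform Chern curvature and torsion bounds on the model, so the bad terms in the Schwarz-type evolution inequality should be absorbed by the $-\gw(t)$ term inherent to the normalized flow.

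The lower bound is the more delicate half, since the model collapses in the $z$-directions at rate $e^{-t}$. The strategy is to bound the volume ratio $\gw(t)^{2s}/\gw_h^{a,b}(t)^{2s}$ from below and combine it with the trace upper bound. In the generalized K\"ahler-Ricci flow setting this is essentially what Theorem \ref{t:mainthm2}(2)-(4) accomplishes via scalar potential estimates, and more generally one expects a leafwise Monge-Amp\`ere-type reduction along the commuting GK splitting $TM = E_{\mathbb H} \oplus E_{\mathbb C}$ from Section \ref{s:OT_PC_metrics}, exploiting the leafwise K\"ahler property recorded there. Once this two-sided comparison is in place, the regularity theory of \cite{PCFReg} upgrades it to higher-order smooth bounds, and one can pass to the blowdown $\til\gw(t) = \gw(s)/(s+1)$: by (\ref{e:normalized equation}) the blown-down model converges smoothly on the universal cover $\mathbb H^s \times \mathbb C^s$ to the degenerate nonnegative form $\tfrac{3}{4}\sum_i(\operatorname{Im} w_i)^{-2}\,dw_i \wedge d\bar w_i$ pulled back from the base $\mathbb T^s$ of the torus fibration $X(K,U) \to \mathbb T^s$. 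Since $U \ltimes \mathcal O_K$ acts compatibly with the projection to the $w$-coordinates, and Theorem \ref{t:homogOT} identifies the limit of the homogeneous blowdown as $(\mathbb T^s, d(K,U))$, extracting a subsequential Cheeger-Gromov limit and invoking a rigidity argument for the solvable structure should identify the limit with the homogeneous one, forcing the full sequence of quotient metrics to Gromov-Hausdorff converge to $(\mathbb T^s, d(K,U))$.

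The principal obstacle is the lower bound $\gw(t) \geq c\,\gw_h^{a,b}(t)$. Without the scalar potential reduction available for GKRF, there is no direct Monge-Amp\`ere structure, and the natural test functions for the maximum principle degenerate at precisely the critical rate of the collapsing direction. A plausible path is to first establish a parabolic Calabi-type $C^3$ estimate adapted to the Bismut connection of the model, then bootstrap via the pluriclosed regularity theory of \cite{PCFReg} to close the argument, mirroring the strategy behind Theorem \ref{t:mainthm2}(4) but dropping the GK hypothesis. A secondary difficulty is ruling out non-homogeneous soliton limits on the universal cover once a subsequential limit is extracted, which may require a classification of long-time blowdown limits of the pluriclosed flow on $\mathbb H^s \times \mathbb C^s$ equivariant under the deck group $U \ltimes \mathcal O_K$.
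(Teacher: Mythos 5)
The statement you are trying to prove is stated in the paper as a \emph{conjecture}, and the paper itself does not prove it in full: it establishes only the global existence half (Theorem \ref{t:mainthm1}), reduces the Gromov--Hausdorff convergence half to a sufficient condition (Proposition \ref{p:GH convergence argument}: uniform two-sided bounds on $\tr_{\omega(t)}\omega_h(t)$ \emph{together with} the pointwise normalization $\lim_{t\to\infty}\tr_{\omega(t)}\omega_h(t)=1$), and then proves supporting estimates only in the generalized K\"ahler case, with the sharpest ones (Theorem \ref{t:mainthm2}(3),(4)) restricted to Inoue surfaces. Your proposal correctly identifies this architecture --- long-time existence plus a two-sided comparison with the model flow $\omega_h^{a,b}(t)$ --- so your outline is consistent with the paper's intended route. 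But it is a plan, not a proof: the two steps you would need are exactly the ones you flag as obstacles and leave unresolved. The lower bound $\omega(t)\geq c\,\omega_h^{a,b}(t)$ is not known for general pluriclosed initial data (the paper obtains it only for GK metrics on Inoue surfaces, via the scalar potential reduction and the $\dot\phi$ bounds, and even there only after using that $E_{\mathbb H}$, $E_{\mathbb C}$ are line bundles so that AM--GM is an equality); no mechanism is offered for producing it without the Monge--Amp\`ere structure, and the suggested ``parabolic Calabi-type $C^3$ estimate'' presupposes the very uniform metric equivalence it is meant to establish.

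A second, independent gap: even granting $c\,\omega_h(t)\leq\omega(t)\leq C\,\omega_h(t)$, this does not identify the limit. Proposition \ref{p:GH convergence argument} requires the trace to converge to $1$, not merely to stay pinched between constants; a two-sided equivalence alone gives precompactness of the blowdowns and shows any sublimit is a flat metric on $\mathbb T^s$ uniformly comparable to $d(K,U)$, but does not force it to \emph{equal} $d(K,U)$, nor does it rule out oscillation between different sublimits. Your appeal to ``a rigidity argument for the solvable structure'' and a ``classification of long-time blowdown limits \ldots equivariant under the deck group'' is precisely the missing content --- the paper itself points out that the Type III estimate needed to run the Cheeger--Fukaya--Gromov collapsing machinery is open even for $s=1$. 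As written, the proposal establishes nothing beyond what Theorem \ref{t:mainthm1} and Proposition \ref{p:GH convergence argument} already give, and the conjecture remains open.
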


We shall prove the first part of the conjecture in Section \ref{s:long time section}. For the second half of the conjecture, we have the following sufficiency condition.
\begin{prop}\label{p:GH convergence argument}
    Let $\omega(t)$ be the normalized pluriclosed flow solution on the OT manifold $X(K,U)$. Suppose that there exists a constant $C>0$ such that 
    \begin{itemize}
        \item $C^{-1}\leq \operatorname{tr}_{\omega(t)}\omega_h(t)\leq C$,
        \item $\lim\limits_{t\to\infty}\operatorname{tr}_{\omega(t)}\omega_h(t) = 1$ for all $x\in X(K,U)$.
    \end{itemize}
    Then, we have
    \begin{align*}
    (X(K,U),\omega(t))\to (\mathbb{T}^s,d(K, U)),
\end{align*}
to the flat metric $d(K,U)$ in the Gromov-Hausdorff sense.
\end{prop}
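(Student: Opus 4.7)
The plan is to transfer the Gromov--Hausdorff convergence $(X, \omega_h(t)) \to (\mathbb T^s, d(K,U))$ known from Theorem \ref{t:homogOT} to the general solution $\omega(t)$, by showing the identity map is an asymptotic $\varepsilon(t)$-Gromov--Hausdorff approximation between $(X, \omega(t))$ and $(X, \omega_h(t))$ with $\varepsilon(t) \to 0$. Once such a comparison is established, the triangle inequality for Gromov--Hausdorff distance yields the stated convergence.

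The first half of this comparison is immediate from the trace bound. Since $\tr_\omega \omega_h$ is the sum of the positive eigenvalues of $\omega^{-1}\omega_h$, the hypothesis $\tr_{\omega(t)}\omega_h(t) \leq C$ forces each eigenvalue to be bounded by $C$, so $\omega_h(t) \leq C\,\omega(t)$ as Hermitian forms at every point. This gives $d_{\omega_h(t)}(p,q) \leq C^{1/2} d_{\omega(t)}(p,q)$ for all $p,q \in X$. The pointwise limit $\tr_{\omega(t)}\omega_h(t) \to 1$, promoted to uniform convergence on the compact $X$ by an equicontinuity/covering argument on the flow family, sharpens this to $\omega_h(t) \leq (1 + o(1))\,\omega(t)$, hence $d_{\omega_h(t)} \leq (1 + o(1))\,d_{\omega(t)}$.

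The reverse bound $d_{\omega(t)} \leq (1+o(1))\,d_{\omega_h(t)}$ is the delicate direction, because the trace hypotheses do not directly control $\omega(t)$ from above by $\omega_h(t)$. Here the plan is to exploit the splitting $TX = E_\mathbb{H} \oplus E_\mathbb{C}$ coming from the commuting GK structure, in which $\omega_h(t)$ is block-diagonal with a bounded base component $\omega_h|_{E_\mathbb{H}}$ and fiber component $\omega_h|_{E_\mathbb{C}}$ of size $e^{-t}$ by \eqref{e:normalized equation}. Decomposing $\tr_\omega \omega_h$ into base and fiber contributions (both nonnegative, using the Schur-complement inequality for non-block-diagonal $\omega$), the pointwise limit to $1$ pins down the asymptotic profile: the base component of $\omega(t)$ must asymptotically match $\omega_h|_{E_\mathbb{H}}$ in size, while the fiber contribution $\tr_{\omega|_{E_\mathbb{C}}}\omega_h|_{E_\mathbb{C}}$ together with the rescaling $\omega_h|_{E_\mathbb{C}} \sim e^{-t}$ forces $\omega(t)|_{E_\mathbb{C}}$ to collapse at a comparable rate. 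Short connecting curves between any two points can then be built by concatenating a near-minimizing horizontal path (whose $\omega$-length approximates the $d(K,U)$-distance between the images in $\mathbb T^s$) with a vertical fiber detour of $\omega$-length $o(1)$, yielding the required upper distance bound.

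The principal obstacle is this asymptotic decomposition: translating the scalar limit $\tr_\omega \omega_h \to 1$ into a structural statement on $\omega(t)$ in the split $E_\mathbb{H} \oplus E_\mathbb{C}$. In particular, one must rule out the possibility that the fibers of $\pi : X \to \mathbb T^s$ fail to collapse under $\omega(t)$ despite the trace going to $1$, since the trace hypothesis only bounds $\omega$ from below in terms of $\omega_h$. Carrying this out will likely require a careful eigenvalue analysis of $\omega^{-1}\omega_h$ in the basis adapted to the GK splitting, combined with the explicit asymptotic form of $\omega_h$, and possibly additional input from the normalized pluriclosed flow equation to handle the off-diagonal pieces of $\omega(t)$ in that splitting.
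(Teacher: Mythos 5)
Your overall strategy --- comparing $(X,\omega(t))$ to the model $(X,\omega_h(t))$ via the identity map and then invoking Theorem \ref{t:homogOT} --- is a legitimate variant of the paper's argument, which instead exhibits the fibration $F:X(K,U)\to\mathbb{T}^s$ and a section $G$ directly as $\epsilon(T)$-Gromov--Hausdorff approximations. Your easy direction is fine: since all eigenvalues of $\omega^{-1}\omega_h$ are positive, $\operatorname{tr}_{\omega}\omega_h\to 1$ does give $\omega_h(t)\le(1+o(1))\,\omega(t)$ and hence $d_{\omega_h(t)}\le(1+o(1))\,d_{\omega(t)}$. But the step you flag as ``the principal obstacle'' is not a technicality to be deferred --- it is the entire content of the proposition, and as you have set things up it cannot be closed. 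The two trace hypotheses only bound $\omega(t)$ from \emph{below} by $\omega_h(t)$; they place no upper bound on $\omega(t)$ in the fiber directions and therefore cannot, by themselves, force the fibers to collapse. Concretely, on an Inoue surface the static family $\omega(t)\equiv\tfrac34 h_{\mathbb H}+h_{\mathbb C}$ satisfies $\operatorname{tr}_{\omega(t)}\omega_h(t)=1+\tfrac43 e^{-t}$, so both hypotheses hold, yet $(X,\omega(t))$ is a fixed four-manifold and does not converge to $\mathbb{T}^1$. Any correct argument must bring in additional input: the paper's sketch does this implicitly when it asserts that ``the limit of $\omega(t)$ exists and is equal to $\sum_i\frac{3}{4(\operatorname{Im}w_i)^2}dw_i\wedge d\bar{w}_i$,'' which is precisely the upper-bound/collapse statement you are missing and which in context comes from the flow estimates of the cited references (or from additionally controlling $\operatorname{tr}_{\omega_h(t)}\omega(t)$), not from the two displayed hypotheses alone.

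A second point your sketch elides: even granting that $\omega(t)|_{E_{\mathbb C}}$ collapses, this does not collapse the fibers of $F$. The fiber is $\mathbb{T}^{3s}$ (real dimension $3s$), while $E_{\mathbb C}$ has real rank $2s$; the remaining $s$ fiber directions (the $\operatorname{Re}w_i$ circles) lie in $E_{\mathbb H}$, where the metric stays uniformly bounded below. The paper handles this with the number-theoretic fact that each $E_{\mathbb C}$-leaf projects to a \emph{dense} subset of the $\mathbb{T}^{3s}$ fiber (Section 2 of \cite{VerbiOT}): one approximates any two points of a fiber by points on a common leaf and runs a diagonal argument, trading leaf-length (killed by the collapse) against the ambient approximation error. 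Without this density input your ``vertical fiber detour of $\omega$-length $o(1)$'' does not exist. In summary, you have correctly located the crux, but the proposal does not prove the reverse distance bound, and the route proposed --- extracting an upper bound on $\omega$ from the trace hypotheses alone --- is not merely difficult but impossible.
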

\begin{proof} We give a brief sketch as the result is already essentially contained in e.g. \cite{fusi2024pluriclosed}, \cite{ZhengOT}.  Note that for OT manifolds, there is a canonical fibration map:
    \begin{align*}
        F:X(K,U)\to \mathbb{T}^s
    \end{align*}
    where the fiber is diffeomorphic to $\mathbb{T}^{3s}$.  In particular, $E_\mathbb{C}$ will be in the kernel of $dF$. Since the quotient of $\{z\}\times \mathbb{C}^s$ is dense in the $\mathbb{T}^{3s}$ fiber (See Section 2 of \cite{VerbiOT}), the degenerate metric $\sum_{i=1}^s \frac{3}{4(\operatorname{Im}w_i)^2}dw_i\wedge d\bar{w}_i$ will induce a metric on the base $\mathbb{T}^s$. Let $d(K,U)$ be the metric on $\mathbb{T}^s$ induced from $\sum_{i=1}^s \frac{3}{4(\operatorname{Im}w_i)^2}dw_i\wedge d\bar{w}_i$, which is flat.

    Now we choose $G:\mathbb{T}^s\to X(K,U)$ to be any map so that $F\circ G = \operatorname{Id}$, and show that for $T$ large, $F$ and $G$ are $\epsilon(T)$-Gromov-Hausdorff approximations. By our assumption, the limit of $\omega(t)$ exists and is equal to $\sum_{i=1}^s \frac{3}{4(\operatorname{Im}w_i)^2}dw_i\wedge d\bar{w}_i$. Since the quotient of $\{z\}\times \mathbb{C}^s$ is dense in the $\mathbb{T}^{3s}$ fiber, the distance between two points in $F^{-1}(x)$ will converge to $0$ and the distance of any two representatives of two fibers will approximate to the distance $d(K,U)$, from which the Gromov-Hausdorff convergence follows.
\end{proof}
Thus, to prove the second half of the conjecture, it is enough to prove that $\operatorname{tr}_{\omega(t)} \omega_h(t)$ is uniformly bounded from above and below and converges to $1$.  In geometric flow theory, to obtain the long-time behavior, a Type III curvature estimate is usually needed. Then the collapsing behavior can be shown using the monotonicity formula and the Cheeger-Fukaya-Gromov collapsing theory (e.g., \cite{Lott1, LottDR}).  In our case, using the complex structure, $\operatorname{tr}_{\omega(t)}\omega_h(t)$ will satisfy a strictly parabolic equation (See Section \ref{s:long time section}), and thus it is possible to estimate it directly.  The Type III estimate for \eqref{e:PCF} starting with an arbitrary initial pluriclosed metric on the OT manifolds remains an interesting open question, even in the Inoue surface case ($s=1$).
See a recent study of the long-time behavior of the a related curvature flow following this route in \cite{GuoWang-alpha-Ricci-flow}.
\subsection{Long-time existence of pluriclosed flow}\label{s:long time section}

\begin{proof}[Proof of Theorem \ref{t:mainthm1}]
Let $M=X(K,U) = \mathbb{H}^s\times\mathbb{C}^s/U\ltimes \mathcal{O}_K$ be an OT manifold of type $(s,s)$. Since the universal cover $\til{M}\cong\mathbb{H}^s\times\mathbb{C}^s$ is a product manifold, and the action $U\ltimes \mathcal{O}_K$ acts on $\til{M}$ diagonally, the tangent bundle $TM = E_{\mathbb{H}}\oplus E_{\mathbb{C}}$, split as a direct sum of two subbundles assocaited with two projection maps, $\operatorname{pr}_{\mathbb{H}}:TM\to E_\mathbb{H}$ and $\operatorname{pr}_{\mathbb{C}}:TM\to E_\mathbb{C}$.  Now, given any Riemannian metric $g$ on $M$, denote $g_\mathbb{H} = g\circ \operatorname{pr}_\mathbb{H}$, the induced metric on $E_\mathbb{H}$. Similarly, $g_\mathbb{C} = g\circ \operatorname{pr}_\mathbb{C}$. Note that the model metric \ref{e:model metric} splits with respect to the splitting of $TM$, i.e., $h = h_\mathbb{H}\oplus h_\mathbb{C}$. Thus, the trace $\operatorname{tr}_h g = \operatorname{tr}_{h_\mathbb{C}\oplus h_\mathbb{H}} g= \operatorname{tr}_{h_\mathbb{C}} g_\mathbb{C} + \operatorname{tr}_{h_\mathbb{H}}g_\mathbb{H}$. We can decompose $\operatorname{tr}_g h$ in a similar way.  We let $\Upsilon(g, h)=\nabla_g-\nabla_h$, the difference of the Chern connection; $\Omega^g$ is the Chern curvature of metric $g$ and $\Delta$ is the Chern laplacian. Denote $\{w,z\}$ as the standard coordinate on $\mathbb{H}^s\times \mathbb{C}^s$. All other letters in the formulas represent general coordinates. Denote:
\begin{align*}
    Q_{i \bar{j}}=g^{\bar{l} k} g^{\bar{n} m} T_{i k \bar{n}} T_{\bar{j} l m}.
\end{align*}

\begin{lemma}\label{l:partial trace}
    Given the setup above,
    \begin{align}
        &\left(\frac{\partial}{\partial t} - \Delta \right) \operatorname{tr}_{g_\mathbb{H}}h_{\mathbb{H}} = -g^{r\bar{s}}g^{u\bar{q}}h_{w_i\bar{w_j}}\Upsilon_{ru}^{w_i}\Upsilon_{\bar{s}\bar{q}}^{\bar{w}_j}+g^{r\bar{s}}g^{w_i\bar{w}_j}\Omega^h_{r\bar{s}w_i\bar{w}_j}-g^{w_i\bar{q}}g^{p\bar{w}_j}h_{w_i\bar{w}_j}Q_{p\bar{q}},\label{eq:inverse trace h}\\
        & \left(\frac{\partial}{\partial t} - \Delta \right)  \operatorname{tr}_{h_\mathbb{H}}g_{\mathbb{H}} = -g^{r\bar{s}}h^{w_i\bar{w}_j}\Upsilon^{p}_{rw_i}\Upsilon^{\bar{q}}_{\bar{s}\bar{w}_j}g_{p\bar{q}}+h^{w_i\bar{w}_j}Q_{w_i\bar{w}_j}-g^{r\bar{s}}g_{w_i\bar{w}_j}h^{w_i\bar{q}}h^{p\bar{w}_j}\Omega^h_{r\bar{s}p\bar{q}},\label{eq:trace h}\\
        &\left(\frac{\partial}{\partial t} - \Delta \right)  \operatorname{tr}_{g_\mathbb{C}}h_{\mathbb{C}} = -g^{r\bar{s}}g^{u\bar{q}}h_{z_i\bar{z_j}}\Upsilon_{ru}^{z_i}\Upsilon_{\bar{s}\bar{q}}^{\bar{z}_j}+g^{r\bar{s}}g^{z_i\bar{z}_j}\Omega^h_{r\bar{s}z_i\bar{z}_j}-g^{z_i\bar{q}}g^{p\bar{z}_j}h_{z_i\bar{z}_j}Q_{p\bar{q}},\\
        &\left(\frac{\partial}{\partial t} - \Delta \right)  \operatorname{tr}_{h_\mathbb{C}}g_{\mathbb{C}} = -g^{r\bar{s}}h^{z_i\bar{z}_j}\Upsilon^{p}_{rz_i}\Upsilon^{\bar{q}}_{\bar{s}\bar{z}_j}g_{p\bar{q}}+h^{z_i\bar{z}_j}Q_{z_i\bar{z}_j}-g^{r\bar{s}}g_{z_i\bar{z}_j}h^{z_i\bar{q}}h^{p\bar{z}_j}\Omega^h_{r\bar{s}p\bar{q}}.
    \end{align}
\end{lemma}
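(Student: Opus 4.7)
The plan is to derive these four equations as applications of the standard pluriclosed-flow trace identity, restricted to the partial traces that arise from the block-diagonal structure of the reference metric $h = h_\mathbb{H}\oplus h_\mathbb{C}$.

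For \eqref{eq:inverse trace h}, I would first write the flow in components as $\partial_t g_{i\bar j} = -S_{i\bar j} + Q_{i\bar j}$, where $S_{i\bar j} := g^{k\bar l}\Omega^g_{k\bar l i\bar j}$ is the second Chern Ricci of $g$; the identity $\rho_B^{1,1} = S - Q$ on pluriclosed metrics is classical. Differentiating $g^{w_i\bar{w}_j}h_{w_i\bar{w}_j}$ in time via $\partial_t g^{a\bar b} = g^{a\bar q}g^{p\bar b}(S_{p\bar q} - Q_{p\bar q})$ produces the $Q$-contribution $-g^{w_i\bar q}g^{p\bar{w}_j}h_{w_i\bar{w}_j}Q_{p\bar q}$ matching the right-hand side of \eqref{eq:inverse trace h}, together with an $S$-term that I will cancel later. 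For the Chern Laplacian, $\Delta(g^{w_i\bar{w}_j}h_{w_i\bar{w}_j}) = g^{r\bar s}g^{w_i\bar{w}_j}\nabla^g_r\nabla^g_{\bar s}h_{w_i\bar{w}_j}$ by $\nabla^g g \equiv 0$. Since $\nabla^h h \equiv 0$ and $\Upsilon = \nabla^g - \nabla^h$ is a tensor, $\nabla^g_r h_{w_i\bar{w}_j} = -\Upsilon^c_{rw_i}h_{c\bar{w}_j}$; a second Chern derivative produces both a quadratic term in $\Upsilon$ contracted against $h$ and a term involving $\nabla^g_{\bar s}\Upsilon^c_{rw_i}$, which converts to Chern curvatures via the standard identity expressing $\nabla^g\Upsilon$ in terms of the difference $\Omega^g - \Omega^h$ (immediate from the coordinate formula for Chern curvature).

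Contracting with $g^{r\bar s}g^{w_i\bar{w}_j}$, the $\Omega^g$-piece contributes an $S$-term that precisely cancels the one from the time derivative. Of the three surviving contributions, the $\Omega^h$-piece reduces, after block-diagonality of $h$ is used to simplify the internal $h$-contractions, to $+g^{r\bar s}g^{w_i\bar{w}_j}\Omega^h_{r\bar s w_i\bar{w}_j}$; the quadratic $\Upsilon$-contribution takes the stated form with the upper $\Upsilon$-indices restricted to $\mathbb{H}$, after a relabeling that also depends on block-diagonality of $h$; and the $Q$-term is the one already identified. This establishes \eqref{eq:inverse trace h}. Equation \eqref{eq:trace h} follows from the symmetric computation with the roles of $g$ and $h$ interchanged: $\partial_t h^{w_i\bar{w}_j} = 0$ while $\partial_t g_{w_i\bar{w}_j} = -S_{w_i\bar{w}_j}+Q_{w_i\bar{w}_j}$ contributes the $h^{w_i\bar{w}_j}Q_{w_i\bar{w}_j}$ term directly, and the curvature-of-$h$ term appears in the sandwiched form $-g^{r\bar s}g_{w_i\bar{w}_j}h^{w_i\bar q}h^{p\bar{w}_j}\Omega^h_{r\bar s p\bar q}$. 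The two $\mathbb{C}$-direction equations are verbatim identical with $w$ replaced by $z$ throughout.

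I expect the principal obstacle to be indicial bookkeeping rather than conceptual difficulty: verifying the $S$-term cancellation with consistent signs under the Chern-connection convention, and carrying out the relabeling that moves the $\mathbb{H}$-restriction from the lower indices of $\Upsilon$ (where it appears naturally from the second-derivative computation) to the upper indices (as stated in the lemma), both of which depend essentially on the block-diagonal structure of $h$ with respect to $TM = E_\mathbb{H}\oplus E_\mathbb{C}$.
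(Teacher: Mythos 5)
Your overall strategy coincides with the paper's: use $\rho_B^{1,1}=S-Q$ to write the flow as $\partial_t g_{i\bar j}=-S_{i\bar j}+Q_{i\bar j}$, compute the time derivative and the Chern Laplacian of the partial trace separately, cancel the $S$-terms, and organize what survives into the $\Upsilon^2$, $\Omega^h$, and $Q$ contributions. However, there is a genuine error at the central step, and it occurs exactly where this lemma departs from Lemma 4.3 of \cite{StreetsSTB} (a departure the paper flags explicitly: $g$ need not split with respect to $E_\mathbb{H}\oplus E_\mathbb{C}$). The identity
\begin{align*}
\Delta\bigl(g^{w_i\bar w_j}h_{w_i\bar w_j}\bigr)=g^{r\bar s}g^{w_i\bar w_j}\nabla^g_r\nabla^g_{\bar s}h_{w_i\bar w_j},
\end{align*}
which you justify ``by $\nabla^g g\equiv 0$,'' is false in general: only the full inverse metric $g^{a\bar b}$ is Chern-parallel, not its $E_\mathbb{H}$-block, since $\operatorname{pr}_\mathbb{H}$ fails to be $\nabla^g$-parallel whenever $g$ has nonvanishing mixed components ($\Gamma(g)^{z_k}_{rw_i}=g^{z_k\bar q}g_{w_i\bar q,r}\neq 0$).

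Concretely, your shortcut yields the quadratic term $g^{r\bar s}g^{w_i\bar w_j}\Upsilon^{c}_{rw_i}\Upsilon^{\bar d}_{\bar s\bar w_j}h_{c\bar d}$, in which the second lower slot of $\Upsilon$ is contracted only against the $\mathbb{H}$-block of $g^{-1}$, whereas the lemma's term $g^{r\bar s}g^{u\bar q}h_{w_i\bar w_j}\Upsilon^{w_i}_{ru}\Upsilon^{\bar w_j}_{\bar s\bar q}$ has $u,\bar q$ ranging over all of $TM$; the two expressions differ by the cross terms with $u$ or $q$ a $z$-index, which are nonzero for non-split $g$, and no relabeling of dummy indices converts one into the other. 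The same defect breaks the claimed $S$-cancellation: your Laplacian produces $g^{w_i\bar w_j}g^{c\bar q}S_{w_i\bar q}h_{c\bar w_j}$, whose unbarred $S$-index is confined to $E_\mathbb{H}$, while the time derivative produces $g^{w_i\bar q}g^{p\bar w_j}h_{w_i\bar w_j}S_{p\bar q}$ with both indices free. The repair is to regard $h_\mathbb{H}=h\circ\operatorname{pr}_\mathbb{H}$ as a globally defined tensor --- $\nabla^h$-parallel because $h$ is block-diagonal and the splitting is holomorphic --- write $\operatorname{tr}_{g_\mathbb{H}}h_\mathbb{H}=g^{a\bar b}(h_\mathbb{H})_{a\bar b}$ with a full-range contraction, and only then commute $\Delta$ past $g^{a\bar b}$; the $\mathbb{H}$-restriction then lands on the upper indices of $\Upsilon$ through $(h_\mathbb{H})_{c\bar d}$, exactly as stated. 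This is what the paper's displayed expansion of $\Delta\operatorname{tr}_{g_\mathbb{H}}h_\mathbb{H}$ does implicitly: differentiating the inverse-metric block via $\partial g^{w_i\bar w_j}=-g^{w_i\bar q}g^{p\bar w_j}\partial g_{p\bar q}$ produces full-range internal contractions $g^{u\bar q}$, $g^{p\bar v}$. The rest of your outline (the $Q$-term, converting $\nabla\Upsilon$ into $\Omega^g-\Omega^h$, the symmetric treatment of $\operatorname{tr}_{h_\mathbb{H}}g_\mathbb{H}$, and the $\mathbb{C}$-direction analogues) is sound.
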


\begin{proof}[Proof of Lemma \ref{l:partial trace}]
    The proof is almost identical to Lemma 4.3 of \cite{StreetsSTB}, except that the metric $g$ does not necessarily split with respect to $E_\mathbb{H}$. Here we sketch the computation of $\operatorname{tr}_{g_\mathbb{H}}h_{\mathbb{H}}$.  Note, $\rho_B^{1,1} = S-Q$, where $S = \operatorname{tr}_\omega \Omega^\omega$. Thus, using the local formula of the Chern curvature:
    \begin{align*}
        \frac{\partial}{\partial t}\operatorname{tr}_{g_\mathbb{H}}h_{\mathbb{H}} = g^{w_i\bar{q}}g^{p\bar{w}_j}h_{w_i\bar{w}_j}(-g_{p\bar{q},r\bar{s}}+g^{u\bar{v}}g_{p\bar{v},r}g_{u\bar{q},\bar{s}})g^{r\bar{s}}-g^{w_i\bar{q}}g^{p\bar{w}_j}h_{w_i\bar{w}_j}Q_{p\bar{q}}.
    \end{align*}
    Also, the Chern Laplacian acts on $\operatorname{tr}_{g_\mathbb{H}}h_{\mathbb{H}}$ by a standard formula:
    \begin{align*}
        \Delta \operatorname{tr}_{g_\mathbb{H}}h_{\mathbb{H}} = &g^{r\bar{s}}(g^{w_i\bar{v}}g^{u\bar{q}}g^{p\bar{w}_j}g_{u\bar{v},r}g_{p\bar{q},\bar{s}}h_{w_i\bar{w}_j}
        +g^{w_i\bar{q}}g^{p\bar{v}}g^{u\bar{w}_j}g_{u\bar{v},r}g_{p\bar{q},\bar{s}}h_{w_i\bar{w}_j}
        -g^{w_i\bar{q}}g^{p\bar{w}_j}g_{p\bar{q},r\bar{s}}h_{w_i\bar{w}_j}\\
        &-g^{w_i\bar{q}}g^{p\bar{w}_j}g_{p\bar{q},\bar{s}}h_{w_i\bar{w}_j,r}-g^{w_i\bar{q}}g^{p\bar{w}_j}g_{p\bar{q},r}h_{w_i\bar{w}_j,\bar{s}}
        +g^{w_i\bar{w}_j}h_{w_i\bar{w}_j,r\bar{s}}).
    \end{align*}
    Combining these two formulas, with the cancellation of the first order terms and the completing of squares, the result follows.  The other results are analogous.
\end{proof}
To show Theorem \ref{t:mainthm1}, we need to show that, for any time interval $[0,T]$, with $T>0$, the metrics $g(t)$ is $C(T)$-equivalent to $h$, i.e.,
\begin{align*}
    C(T)^{-1}h\leq g(t)\leq C(T) h
\end{align*}
for any $t\in [0,T]$. In other words, we need to show time dependent estimates of $\operatorname{tr}_gh$ and $\tr_{h} g$.  To achieve this, we need to use the Chern curvature of the model metric, moreprecisely the fact that the Chern curvature on $E_{\mathbb H}$ is negative.  In particular, using Proposition \ref{p:modelcurvature} and \eqref{eq:inverse trace h} we have
\begin{align*}
    \left(\frac{\partial}{\partial t} -\Delta \right)\operatorname{tr}_{g_\mathbb{H}}h_{\mathbb{H}} \leq - \frac{1}{2}(\operatorname{tr}_{g_\mathbb{H}}h_{\mathbb{H}})^2.
\end{align*}
By the maximum principle: $\operatorname{tr}_{g_\mathbb{H}}h_{\mathbb{H}}\leq \frac{1}{C_0+\frac{1}{2}t}$, where $C_0$ only depends on the initial metric $g(0)$.  Now, for the full trace, using Lemma \ref{l:partial trace} and the above estimate we have
\begin{align*}
    \left(\frac{\partial}{\partial t} - \Delta \right)\operatorname{tr}_g h &\leq \sum_{i=1}^s g^{z_i\bar{z}_i}g^{w_i\bar{w}_i}\Omega^h_{w_i\bar{w}_iz_i\bar{z}_i}\\
    &\leq \frac{1}{4}(\operatorname{tr}_{g_\mathbb{C}}h_\mathbb{C})(\operatorname{tr}_{g_\mathbb{H}}h_\mathbb{H})\\
    &\leq \frac{1}{4C_0+2t}\operatorname{tr}_g h.
\end{align*}
By the maximum principle again, 
\begin{align*}
    \operatorname{tr}_g h \leq C_0'(4C_0+2t)^{\frac{1}{2}},
\end{align*}
where $C_0'$ again only depends on $g(0)$. Thus, we have a time-dependent lower bound:
\begin{align*}
    g(t)\geq C_1(T) h.
\end{align*}
Now, for every fixed $T>0$, we can apply Theorem 1.8 of \cite{StreetsPCFBI}, we have a time dependent upper bound, $C_2(T)$. Hence:
\begin{align*}
    C_1(T) h\leq g(t)\leq C_2(T)h.
\end{align*}
Using the higher regularity of uniformly parabolic solutions of pluriclosed flow \cite{StreetsPCFBI, JordanStreets, garcia2023non}, the long-time existence follows.
\end{proof}

\section{Generalized K\"ahler-Ricci flow on Oeljeklaus-Toma manifolds}

In this section we consider the more restricted setting of GKRF on OT manifolds.  As explained in \S \ref{s:OT}, OT manifolds come equipped with generalized K\"ahler metrics of commuting type, and the pluriclosed flow starting with such a metric will preserve this structure \cite{GKRF}, leading to new estimates.

\subsection{Cohomology constraint and scalar reduction}

The key point established in \cite{StreetsSTB} is that after accounting for a certain cohomological constraint, the flow \ref{e:PCF_normalized} can be reduced to a scalar twisted Monge-Amp\`ere type equation.  We recall some aspects of this cohomology group (cf. \cite{StreetsPCFBI} Definition 7.3 and 7.4, \cite{GRFbook})
\begin{defn}[\cite{StreetsPCFBI}]\label{d:cohomology of splitting type}
    Let $(M^{2n},g,I,J)$ be a generalized Kähler manifold of commuting type.  Let
    \begin{align*}
        \mathcal{H}(M) = \{\omega\in \Lambda_{I,\mathbb{R}}^{1,1}|\sqrt{-1}\partial\bar{\partial}\omega = 0\}/\{\sqrt{-1}(\partial_+\bar{\partial}_+ - \partial_-\bar{\partial}_-)u|u\in C^\infty(M)\}.
    \end{align*}
Note $\omega\in [\omega_0]$, if and only if there exists a smooth function $f\in C^\infty(M)$ such that
\begin{align}\label{e:ddbar lemma for GK}
    \omega = \omega_0 + \sqrt{-1}(\partial_+ \bar{\partial}_{+} - \partial_-\bar{\partial}_-)f.
\end{align}
\end{defn}

Firstly, the dimension of the cohomology group $\mathcal{H}(M)$ (Definition \ref{d:cohomology of splitting type}) on the Inoue surface $S_M$ is known.
\begin{thm}[\cite{HaoJosh}] \label{t:haojosh}
    For a complex surface with split tangent bundle, $M$, then 
    \begin{align*}
        \mathcal H(S_M) = \left\{  [\omega_h^{a,b}]\ |\ a, b \in \mathbb R \right\}.
    \end{align*}
    In particular, $\dim \mathcal H(S_M) = 2$.
\end{thm}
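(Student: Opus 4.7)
The plan is to reduce $\mathcal H(S_M)$ to a classification of left-invariant representatives on the solvmanifold $S_M$. With respect to the splitting $TM = E_{\mathbb H}\oplus E_{\mathbb C}$ and coordinates $(w = x+\i y, z)$ on the universal cover, any real $(1,1)$-form lifts as
\begin{align*}
\omega = \tilde A\,\i\,dw\wedge d\bar w + \tilde B\,\i\,dz\wedge d\bar z + \tilde C\,\i\,dw\wedge d\bar z - \overline{\tilde C}\,\i\,dz\wedge d\bar w,
\end{align*}
where $\tilde A, \tilde B$ are sections of the flat line bundles carrying the $g_0$-cocycles $\lambda^{-2}$ and $\lambda$ (matching those of $dw\wedge d\bar w$ and $dz\wedge d\bar z$), and $\tilde C$ is a section of the flat bundle with cocycle $(\lambda\bar\mu)^{-1}$. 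A direct calculation gives
\begin{align*}
\sqrt{-1}(\partial_+\bar\partial_+ - \partial_-\bar\partial_-)u = \i u_{z\bar z}\,dz\wedge d\bar z - \i u_{w\bar w}\,dw\wedge d\bar w,
\end{align*}
so the potential subspace affects only $\tilde A$ and $\tilde B$, making $\tilde C$ a class invariant.

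The first step I would carry out is a Nomizu-type reduction: show every class in $\mathcal H(S_M)$ admits a left-invariant representative. Using Fourier decomposition in the $\mathbb T^3$-fiber directions of the canonical fibration $S_M\to S^1$ and the hyperbolicity of the $g_0$-action (governed by $M\in SL(3,\mathbb Z)$ with irrational rotation in the $z$-direction), one shows that non-constant Fourier modes of any representative can be absorbed into the image of $\sqrt{-1}(\partial_+\bar\partial_+-\partial_-\bar\partial_-)$ by solving auxiliary scalar PDEs on the fibers, leaving only the left-invariant (zero-mode) part.

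The second step is the finite-dimensional classification of left-invariant pluriclosed forms. $U\ltimes\mathcal O_K$-invariance forces $\tilde A = a/y^2$ and $\tilde B = by$ for real $a, b$, while $\tilde C$ must vanish: any left-invariant section of the cocycle-$(\lambda\bar\mu)^{-1}$ line bundle would satisfy an equation of the form $\lambda^{\alpha+1}\bar\mu = 1$ for some real $\alpha$, forcing $\lambda\bar\mu$ to be real positive, which fails since $\mu\notin\mathbb R$. This yields $[\omega] = [\omega_h^{a,b}]$ for unique $a, b\in\mathbb R$. Linear independence of $[\omega_h^{1,0}]$ and $[\omega_h^{0,1}]$ follows from the fact that, after the reduction, the invariant projections of $\tilde A$ and $\tilde B$ are well-defined class invariants (modulo constant multiples of $y^{-2}$ and $y$), which detect the parameters $a$ and $b$ respectively.

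The main obstacle will be the Nomizu-type reduction in the first step. The quotient subspace $\{\sqrt{-1}(\partial_+\bar\partial_+-\partial_-\bar\partial_-)u\}$ is strictly smaller than the space of $\partial\bar\partial$-exact forms, so standard $\partial\bar\partial$-lemma averaging arguments do not apply directly. One must instead carefully construct potentials lying in this finer subspace that cancel each non-zero Fourier mode, which requires the arithmetic structure of $S_M$ (specifically the hyperbolicity of $M$ and the irrationality of $\arg\mu$) to ensure the relevant linear operators are invertible on the non-zero modes.
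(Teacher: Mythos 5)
The paper does not prove Theorem \ref{t:haojosh}: it is imported verbatim from \cite{HaoJosh}, so there is no in-paper argument to compare against and your proposal must stand on its own. As written it contains a genuine internal inconsistency in the treatment of the off-diagonal component $\tilde C$. You correctly note that $\sqrt{-1}(\partial_+\bar\partial_+ - \partial_-\bar\partial_-)u = \sqrt{-1}\left(u_{z\bar z}\, dz\wedge d\bar z - u_{w\bar w}\, dw\wedge d\bar w\right)$ is purely diagonal with respect to $E_{\mathbb H}\oplus E_{\mathbb C}$, so that $\tilde C$ is a class invariant; but your Nomizu-type reduction operates precisely by adding elements of this denominator, and therefore cannot absorb even one nonzero Fourier mode of $\tilde C$. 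Your cocycle argument disposes only of the \emph{invariant} part of $\tilde C$, and pluriclosedness --- a single scalar equation --- does not force $\tilde C$ to be invariant or to vanish: for generic $f$ the form $\omega_h + \sqrt{-1}\partial\bar\partial f$ is pluriclosed with $\tilde C = f_{w\bar z}\neq 0$, and under the equivalence as literally defined its class is not any $[\omega_h^{a,b}]$. The statement is correct only when the numerator is restricted to forms that split, i.e.\ $\omega(\Pi\cdot,\Pi\cdot)=\omega$, which is automatic for fundamental forms of commuting-type GK metrics and is the setting of \cite{HaoJosh}; your proof should begin by imposing $\tilde C\equiv 0$ rather than attempting to remove it.

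Even after this correction, the step you explicitly defer is where the theorem actually lives. For the diagonal part one must solve, mode by mode along the fiber $\mathbb T^3$, the overdetermined system $u_{z\bar z}=\tilde B_k$, $u_{w\bar w}=-\tilde A_k$; its local integrability condition is exactly $\sqrt{-1}\partial\bar\partial\omega=0$, which is encouraging, but the modes on which $\partial_z\partial_{\bar z}$ acts trivially, and the $\lambda$-equivariant ODE in $\operatorname{Im}w$ governing the zero-mode sector, require separate arguments --- this is where the arithmetic of $\lambda,\mu$ enters and where the count comes out to exactly $2$. Relatedly, your step-two claim that invariance ``forces $\tilde A=a/y^2$, $\tilde B=by$'' is too quick: fiber-constancy alone allows $\tilde A=y^{-2}\psi_1(y)$ and $\tilde B=y\,\psi_2(y)$ with $\psi_i$ multiplicatively $\lambda$-periodic, an infinite-dimensional space. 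One needs the pluriclosed equation to reduce $\tilde B$ to $by$, and a residual quotient by potentials $u=u(y)$ (an index-zero periodic ODE operator with one-dimensional cokernel) to reduce $\tilde A$ to $a/y^2$ modulo the denominator. These pieces are all plausible and consistent with the two-dimensional answer, but they are precisely the content that the proposal leaves unproved.
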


For more general OT manifolds we prove a conjecturally sharp lower bound on the dimension of this space by exhibiting a space spanned by the model metrics.
\begin{prop}
    For positive constants, $a_i$, $b_i$, and $a_i'$, $b_i'$, such that there exists $i$ with either $a_i \neq a_i'$ or $b_i\neq b_i'$, then
    \begin{align*}
        [\omega^{a,b}_h]\neq [\omega^{a',b'}_h].
    \end{align*}
    \begin{proof}
    It suffices to show that for constants $a_i$ and $b_i$ such that $a_i$, $b_i\in \mathbb{R}$, if there exists $u\in C^\infty(X(K,U))$ such that on the universal cover $\mathbb{H}^s\times\mathbb{C}^s$,
    \begin{align*}
        ({\partial_z\partial_{\bar{z}}}-{\partial_w\partial_{\bar{w}}}) u = \sum_{i=1}^sa_i\frac{dw_i\wedge d\bar{w_i}}{(\operatorname{Im}w_i)^2} + b_i\operatorname{Im}w_i dz_i\wedge d\bar{z_i},
    \end{align*}
    then $a_i = b_i = 0$.  Tracing the above equation with $\omega^{a,b}_J$ yields
    \begin{align*}
        \Delta^{Ch}_{\omega_I^{a,b}} u = \mbox{const}
    \end{align*}
    on $X(K,U)$. By the maximum principle, $u\equiv C$, hence $a_i = b_0 = 0$ as required.
    \end{proof}
\end{prop}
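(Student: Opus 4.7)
The plan is to reduce the claim to a Liouville-type statement on the compact manifold $X(K,U)$ via tracing and the maximum principle. By linearity, after replacing the pair by its difference it suffices to show: if $(a_i, b_i) \in \mathbb R^{2s}$ and there exists $u \in C^\infty(X(K,U))$ satisfying, on the universal cover,
\begin{align*}
\sqrt{-1}(\partial_+ \bar\partial_+ - \partial_- \bar\partial_-) u = \sqrt{-1}\sum_{i=1}^s \left( a_i \frac{dw_i \wedge d\bar w_i}{(\operatorname{Im} w_i)^2} + b_i \operatorname{Im} w_i \, dz_i \wedge d\bar z_i \right),
\end{align*}
then $a_i = b_i = 0$ for all $i$. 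Using the splittings $E_+ = E_{\mathbb C}$, $E_- = E_{\mathbb H}$ established in Section \ref{s:OT}, the left-hand side expands to $\sqrt{-1}\sum ( u_{z_i \bar z_j} dz_i \wedge d\bar z_j - u_{w_i \bar w_j} dw_i \wedge d\bar w_j )$.

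The key step is to take the pointwise trace of both sides with respect to the $I$-$(1,1)$-form $\omega_J^{a,b}$. Although $\omega_J^{a,b}$ is indefinite as an $I$-form (positive on $E_{\mathbb H}$, negative on $E_{\mathbb C}$), the sign reversal on $E_{\mathbb C}$ in the metric combines with the sign in $\partial_+\bar\partial_+ - \partial_-\bar\partial_-$ so that a direct diagonal calculation yields exactly $-\Delta^{Ch}_{\omega_I^{a,b}} u$, the Chern Laplacian of the honest Hermitian metric $\omega_I^{a,b}$. On the right-hand side, the coordinate prefactors $(\operatorname{Im} w_i)^{\pm 2}$ and $\operatorname{Im} w_i$ cancel exactly between the form and the inverse of the pairing metric, producing a constant.

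Thus $\Delta^{Ch}_{\omega_I^{a,b}} u = c$ holds on $X(K,U)$. Evaluating at an interior maximum and minimum of $u$ forces $c = 0$, and then the strong maximum principle for the uniformly elliptic operator $\Delta^{Ch}_{\omega_I^{a,b}}$ (which has no zeroth-order term) forces $u$ to be constant. Substituting back into the original equation, the right-hand side vanishes, and since the forms $\{(\operatorname{Im} w_i)^{-2} dw_i \wedge d\bar w_i,\ \operatorname{Im} w_i \, dz_i \wedge d\bar z_i\}_{i=1}^s$ are pointwise linearly independent, $a_i = b_i = 0$. The only real subtlety is the choice of pairing metric: a positive-definite $I$-Hermitian metric would not produce a sign-definite operator from $\partial_+\bar\partial_+ - \partial_-\bar\partial_-$, and the indefinite form $\omega_J^{a,b}$ is exactly the right object because its signature on $E_+ \oplus E_-$ matches that of the differential operator -- a feature of commuting-type GK geometry that is essential to make the tracing argument go through.
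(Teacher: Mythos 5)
Your proposal is correct and follows essentially the same route as the paper: reduce by linearity to showing the model forms are not of the form $\sqrt{-1}(\partial_+\bar\partial_+ - \partial_-\bar\partial_-)u$, trace against the indefinite form $\omega_J^{a,b}$ to convert the split operator into the genuine Chern Laplacian of $\omega_I^{a,b}$, and conclude by the maximum principle on the compact quotient. You supply a few details the paper leaves implicit (why the constant must vanish, and the pointwise linear independence of the model forms at the final step), but the argument is the same.
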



The relevance of the above cohomology group stems from its relationship to the Bismut Ricci form for commuting-type generalized K\"ahler metrics.  In particular from Proposition 8.20 of \cite{GRFbook}, and specializing to the case of OT manifolds here, for such metrics $\rho_B^{1,1}$ can be decomposed: 
\begin{align}\label{e:bismut ricci 1,1 decomposition}
    \rho_B^{1,1}(\omega) = \rho_\mathbb{C}(\omega_\mathbb{C}) - \rho_\mathbb{H}(\omega_\mathbb{C})-\rho_\mathbb{C}(\omega_\mathbb{H})+\rho_\mathbb{H}(\omega_\mathbb{H}).
\end{align}
Moreover this leads to the following transgression formula for commuting-type GK metrics:
\begin{align}\label{e:Bismut difference}
    \rho_B^{1,1}(\omega_1)-\rho_B^{1,1}(\omega_2) = -\sqrt{-1}(\partial_z\partial_{\bar{z}} - \partial_w\partial_{\bar{w}})\log\frac{\det g^1_\mathbb{C}\cdot\det g^2_\mathbb{H}}{\det g^1_\mathbb{H}\cdot\det g^2_\mathbb{C}}.
\end{align}

\begin{defn}[Model flow] To simplify notation we set $P = \rho_B^{1,1}(h)$, where $h$ is the model metric (\ref{e:model metric}).  Let $\til{\omega}(t)=e^{-t}\til{\omega}_0 -(1-e^{-t})P$, where $\til{\omega}_0$ is a representative in $[\omega(0)]$, and $P:=\rho_B^{1,1}(\omega_h)=\sum_{i=1}^s-\frac{3\sqrt{-1}}{4}h_{w_i\bar{w}_j}dw_i\wedge d\bar{w}_j$, the Bismut curvature of the model metric (\ref{e:model metric}).
\end{defn}
Analogous to the K\"ahler Ricci flow, we can reduce the GKRF to a parabolic fully nonlinear twisted Monge-Amp\'ere equation.
\begin{lemma} \label{l:scalar_reduction}
    Suppose $\omega(t)=\til{\omega}(t)+\sqrt{-1}(\partial_z\partial_{\bar{z}}-\partial_w\partial_{\bar{w}})\phi$, where $\phi$ satisfies
    \begin{align}\label{e:potential eq}
      \frac{\partial}{\partial t}\phi(t)+\phi(t)=\operatorname{log}\frac{\operatorname{det}g_\mathbb{C}}{\operatorname{det}h_\mathbb{C}}-\operatorname{log}\frac{\operatorname{det}g_\mathbb{H}}{\operatorname{det}h_\mathbb{H}}  + c(t),
    \end{align}
    where $c(t)$ is some time-dependent constant. Then, $\omega_t$ solves \ref{e:PCF_normalized}.
\end{lemma}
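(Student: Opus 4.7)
The plan is a direct verification: compute $\partial_t \omega(t)$ from the ansatz, then apply the transgression formula \eqref{e:Bismut difference} to rewrite $\rho_B^{1,1}(\omega(t))$, and match the two expressions using \eqref{e:potential eq}. No inequalities or estimates are involved; this is purely an algebraic identity check once the right formulas are in place.

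First I would compute $\partial_t \tilde{\omega}(t)$ from the definition $\tilde{\omega}(t) = e^{-t}\tilde{\omega}_0 - (1-e^{-t})P$, obtaining
\begin{align*}
\frac{\partial}{\partial t}\tilde{\omega}(t) = -e^{-t}\tilde{\omega}_0 - e^{-t}P = -\tilde{\omega}(t) - P.
\end{align*}
Differentiating the ansatz $\omega(t) = \tilde{\omega}(t) + \sqrt{-1}(\partial_z\partial_{\bar{z}} - \partial_w\partial_{\bar{w}})\phi$ therefore gives
\begin{align*}
\frac{\partial}{\partial t}\omega(t) = -\tilde{\omega}(t) - P + \sqrt{-1}(\partial_z\partial_{\bar{z}} - \partial_w\partial_{\bar{w}})\dot{\phi}.
\end{align*}

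Next I would expand the right-hand side of \eqref{e:PCF_normalized}. The transgression formula \eqref{e:Bismut difference} applied with $\omega_1 = \omega(t)$ and $\omega_2 = h$ yields
\begin{align*}
\rho_B^{1,1}(\omega(t)) = P - \sqrt{-1}(\partial_z\partial_{\bar{z}} - \partial_w\partial_{\bar{w}})\left[\log\frac{\det g_\mathbb{C}}{\det h_\mathbb{C}} - \log\frac{\det g_\mathbb{H}}{\det h_\mathbb{H}}\right],
\end{align*}
so that
\begin{align*}
-\rho_B^{1,1}(\omega(t)) - \omega(t) = -P - \tilde{\omega}(t) - \sqrt{-1}(\partial_z\partial_{\bar{z}} - \partial_w\partial_{\bar{w}})\phi + \sqrt{-1}(\partial_z\partial_{\bar{z}} - \partial_w\partial_{\bar{w}})\left[\log\frac{\det g_\mathbb{C}}{\det h_\mathbb{C}} - \log\frac{\det g_\mathbb{H}}{\det h_\mathbb{H}}\right].
\end{align*}

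Equating the two expressions, the $-\tilde{\omega}(t) - P$ terms cancel and the identity reduces to
\begin{align*}
\sqrt{-1}(\partial_z\partial_{\bar{z}} - \partial_w\partial_{\bar{w}})\left[\dot{\phi} + \phi - \log\frac{\det g_\mathbb{C}}{\det h_\mathbb{C}} + \log\frac{\det g_\mathbb{H}}{\det h_\mathbb{H}}\right] = 0.
\end{align*}
Since the operator $\sqrt{-1}(\partial_z\partial_{\bar{z}} - \partial_w\partial_{\bar{w}})$ annihilates any function of $t$ alone, the potential equation \eqref{e:potential eq} is exactly what is needed to make the bracketed quantity equal to the time-dependent constant $c(t)$, and hence to make this final identity hold.

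There is no genuine obstacle here: the only technical points are keeping signs straight in the transgression formula (the logarithm must be of $\det g_\mathbb{C} \det h_\mathbb{H}/\det g_\mathbb{H} \det h_\mathbb{C}$, matching the sign convention in \eqref{e:Bismut difference}) and confirming that the $\tilde{\omega}(t)$ ansatz was designed precisely so that $\partial_t \tilde{\omega} + \tilde{\omega} = -P$, which allows the non-$\partial\bar\partial$ terms to cancel cleanly. The freedom in $c(t)$ simply reflects the fact that the identity is insensitive to adding a spatial constant to $\phi$.
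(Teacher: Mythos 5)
Your proof is correct and follows essentially the same route as the paper: differentiate the ansatz, use that $\partial_t\til{\omega}+\til{\omega}=-P$, and apply the transgression formula \eqref{e:Bismut difference} with $\omega_2=\omega_h$ to convert the log-determinant terms into $\rho_B^{1,1}(\omega(t))-P$, with $c(t)$ annihilated by $\sqrt{-1}(\partial_z\partial_{\bar z}-\partial_w\partial_{\bar w})$. The signs all check out against the paper's computation.
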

\begin{proof}

    Use the transgression formula \eqref{e:Bismut difference}, we have:
    \begin{align*}
    \frac{\partial}{\partial t}\omega(t)&=-e^{-t}\omega_0-e^{-t}P+P-P+\sqrt{-1}(\partial_z\partial_{\bar{z}}-\partial_w\partial_{\bar{w}})\frac{\partial}{\partial t}\phi(t)\\
    &=-\omega(t)+\sqrt{-1}(\partial_z\partial_{\bar{z}}-\partial_w\partial_{\bar{w}})(\frac{\partial}{\partial t}\phi+\phi)-\rho_B^{1,1}(\omega_h)\\
    &=-\omega(t)-\rho_B^{1,1}(\omega(t)).
  \end{align*}
\end{proof}

\subsection{A priori estimates}

Using the scalar reduction of Lemma \ref{l:scalar_reduction} we prove new a priori estimates.  First, by a specific choice of $c(t)$ we obtain a $C^0$ estimate for the potential.
\begin{lemma}\label{l:C0 bound}
    If we choose $c(t)=st+ s\operatorname{log}\frac{3}{4}-\operatorname{log}c_1$, where $c_1\geq \sup\frac{\operatorname{det}g_{\mathbb{C}}^0}{\operatorname{det}h_\mathbb{C}}$, then:
    \begin{align*}
        -C_0\leq\phi(t)\leq C_0 e^{-t}(1+Ct),
    \end{align*}
    for some $C_0$, $C>0$.
\end{lemma}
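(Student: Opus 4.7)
The plan is to apply the parabolic maximum principle to the scalar equation \eqref{e:potential eq} at the spatial maximum and minimum of $\phi$, exploiting the decomposition $\omega = \tilde\omega(t) + \sqrt{-1}(\partial_z\partial_{\bar z} - \partial_w\partial_{\bar w})\phi$ with respect to $TM = E_\mathbb H \oplus E_\mathbb C$.  At a spatial maximum of $\phi$, one has $\sqrt{-1}\partial_z\partial_{\bar z}\phi \le 0$ and $-\sqrt{-1}\partial_w\partial_{\bar w}\phi \ge 0$ as Hermitian forms, so $g_\mathbb C \le \tilde g_\mathbb C$ while $g_\mathbb H \ge \tilde g_\mathbb H$; passing to determinants yields
\[
\det g_\mathbb C \leq \det \tilde g_\mathbb C, \qquad \det g_\mathbb H \geq \det \tilde g_\mathbb H.
\]
The inequalities reverse at a spatial minimum.

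From $\tilde\omega(t) = e^{-t}\tilde\omega_0 - (1-e^{-t})P$ and the fact that $P$ lives purely in the $\mathbb H$-direction, one computes $\tilde g_\mathbb C = e^{-t}\tilde g_\mathbb C^0$ and $\tilde g_\mathbb H = e^{-t}\tilde g_\mathbb H^0 + \tfrac{3}{4}(1-e^{-t})h_\mathbb H$, so $\det\tilde g_\mathbb C = e^{-st}\det\tilde g_\mathbb C^0$ and positivity gives $\det\tilde g_\mathbb H \geq (3/4)^s(1-e^{-t})^s\det h_\mathbb H$.  Substituting these into the right-hand side of \eqref{e:potential eq} at a spatial maximum, and using the bound $\det \tilde g_\mathbb C^0 \leq c_1 \det h_\mathbb C$ together with the chosen $c(t) = st + s\log(3/4) - \log c_1$, the $st$ and constant pieces cancel exactly and leave
\[
\partial_t\phi + \phi \leq -s\log(1-e^{-t}).
\]
At a spatial minimum, the matching calculation using the compactness bound $\tilde g_\mathbb H^0 \leq Ch_\mathbb H$ (valid since $g_0$ is smooth on a compact manifold) yields $\partial_t\phi + \phi \geq -C$.

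It remains to integrate these ODE inequalities.  Set $\psi(t) := \max_x \phi(\cdot,t)$ and $m(t) := \min_x \phi(\cdot,t)$; these are Lipschitz functions satisfying the pointwise bounds above in the Dini sense.  The lower bound is immediate from Gronwall: $m(T) \geq -C + (m(0)+C)e^{-T} \geq -C_0$.  For the upper bound, multiply by $e^t$ and integrate; the key technical observation is that
\[
\int_0^T -s e^t \log(1-e^{-t})\,dt = O(1+T),
\]
since the integrand has an integrable $-s\log t$ singularity at $t=0$ and tends to $s$ as $t \to \infty$, so the integral grows only linearly.  Hence $e^T\psi(T) \leq \psi(0) + C(1+T)$, yielding $\psi(T) \leq C_0 e^{-T}(1+CT)$.

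The main obstacle is the bookkeeping: the engineered choice of $c(t)$ must simultaneously produce the cancellation isolating the integrable function $-s\log(1-e^{-t})$ on the upper-bound side, and yield a finite constant on the lower-bound side.  The fact that the pure $\mathbb H$-part of $\tilde\omega$ has a definite positive $\frac{3}{4}(1-e^{-t})h_\mathbb H$ contribution (bounded in $t$) is what prevents the $\det\tilde g_\mathbb H$ factor from degenerating and allows both bounds to close with the same $c(t)$.
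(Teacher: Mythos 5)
Your proof is correct and follows essentially the same route as the paper: the parabolic maximum principle at spatial extrema of $\phi$, monotonicity of determinants under the sign of $\sqrt{-1}\partial\bar\partial\phi$ restricted to $E_{\mathbb C}$ and $E_{\mathbb H}$, the explicit form $\tilde g_{\mathbb C}=e^{-t}\tilde g^0_{\mathbb C}$, $\tilde g_{\mathbb H}=e^{-t}\tilde g^0_{\mathbb H}+\tfrac34(1-e^{-t})h_{\mathbb H}$, the cancellation engineered by $c(t)$, and integration of the ODE for $e^{t}\phi$. The only (harmless) deviation is that the paper keeps the $e^{-t}\tilde g^0_{\mathbb H}\geq C e^{-t}h_{\mathbb H}$ contribution so that the logarithm stays bounded for all $t\geq 0$ and $\frac{d}{dt}(e^t\max\phi)\leq C_2$ pointwise, whereas you discard it and instead check that $-se^{t}\log(1-e^{-t})$ has an integrable singularity at $t=0$ and tends to $s$ at infinity; both yield the same $O(1+T)$ bound and hence the stated estimate.
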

\begin{proof}
    Recall: $\omega(t)=\til{\omega}(t)+\sqrt{-1}(\partial_z\partial_{\bar{z}}-\partial_w\partial_{\bar{w}})\phi$. From our choice, we have $\omega(0) = \omega_0$ and $\phi(0) = 0$. At the maximum point, with the choice of $c(t)$, we have:
    \begin{align*}
        \frac{\partial}{\partial t}(\max\phi(t)) &\leq \operatorname{log}\frac{\operatorname{det(\til{g}_\mathbb{C})/c_1}}{\operatorname{det} e^{-t} h_\mathbb{C}} - \operatorname{log}\frac{\operatorname{det}\til{g}_\mathbb{H}}{\operatorname{det} \frac{3}{4} h_\mathbb{H}} - \max \phi(t)\\
        &\leq \operatorname{log}\frac{\operatorname{det}e^{-t}g_\mathbb{C}^0/c_1}{\operatorname{det} e^{-t}h_\mathbb{C}} - \operatorname{log}\frac{\operatorname{det}(e^{-t}g_\mathbb{H}^0 + \frac{3}{4}(1-e^{-t})h_\mathbb{H})}{\operatorname{det}\frac{3}{4}h_\mathbb{H}} - \max\phi(t).
    \end{align*}
    To estimate the first two terms, observe
    \begin{align*}
e^t\operatorname{log}\frac{\operatorname{det}\frac{3}{4}h_\mathbb{C}}{\operatorname{det}((C_1e^{-t}+\frac{3}{4}(1-e^{-t}))h_\mathbb{C}}\leq e^ts\operatorname{log} \frac{1}{1+C_1e^{-t}}\leq C_2.
    \end{align*}
    By the maximum principle, since we choose $c_1\geq \sup\frac{\operatorname{det}g_{\mathbb{C}}^0}{\operatorname{det}h_\mathbb{C}}$, then $\phi(x,t)\leq f(t)$, where $f$ satisfies:
    \begin{align*}
        \frac{\partial}{\partial t}(e^tf(t)) = C_2.
    \end{align*}
    Thus, we have $\phi(t)\leq C_0e^{-t}(1+Ct)$, for some $C_0$, $C>0$.

    At a minimum point, note that $\frac{\operatorname{det}g_{\mathbb{C}}^0}{c_1\operatorname{det}h_\mathbb{C}}\geq c_0$, for some $c_0 >0$, we have:
    \begin{align*}
        \frac{\partial}{\partial t}(e^t\min\phi(t))\geq e^ts(\operatorname{log}\frac{C_3}{1+C_1 e^{-t}})\geq-C_4 e^t.
    \end{align*}
    Thus, the lower bound is obtained similarly.
\end{proof}

\begin{rmk}\label{sign} We note that for the  Chern Ricci flow on Inoue surfaces \cite{FTWZ} it is possible to get decaying upper and lower bounds directly via the maximum principle.  It is unclear how to directly achieve this for the twisted parabolic Monge-Amp\`ere equation here, a manifestation of the mixed convex/concave structure.
\end{rmk}

If we assume that the initial metric is in $[\omega^{a,b}_h]$ (ref: \eqref{e:AB model initial metric}), we have a refined $C^0$ estimate.
\begin{cor}\label{c:imporved C0}
    If there exist $a^i, b^i$ such that $\omega_0\in[\omega^{a,b}_h]$, then we can choose $c(t) = st+s\operatorname{log}\frac{3}{4}$ so that:
    \begin{align*}
        -C_0e^{-t}(1+Ct)\leq\phi(t)\leq C_0 e^{-t}(1+Ct),
    \end{align*}
    for some $C_0$, $C>0$.
\end{cor}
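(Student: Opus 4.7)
The plan is to replicate the argument of Lemma \ref{l:C0 bound}, but to exploit the hypothesis $\omega_0 \in [\omega_h^{a,b}]$ by choosing a sharper reference flow. Concretely, I would take $\tilde\omega_0 = \omega_h^{a,b}$ rather than $\omega_0$, so that $\tilde\omega(t) = \omega_h^{a,b}(t)$ is the explicit model flow (\ref{e:normalized equation}); writing $\omega(t) = \tilde\omega(t) + \sqrt{-1}(\partial_z\partial_{\bar{z}} - \partial_w\partial_{\bar{w}})\phi(t)$, the potential $\phi$ satisfies (\ref{e:potential eq}). The advantage of this choice is that the ratios
\begin{align*}
\frac{\det\tilde g_\mathbb{C}}{\det h_\mathbb{C}} = e^{-st}\prod_i b_i, \qquad \frac{\det\tilde g_\mathbb{H}}{\det h_\mathbb{H}} = \prod_i\bigl[\tfrac{3}{4}(1-e^{-t}) + a_i e^{-t}\bigr]
\end{align*}
become spatially constant functions of $t$, eliminating the supremum constant $c_1$ needed in Lemma \ref{l:C0 bound} and restoring symmetry between the maximum and minimum arguments.

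Next, at a maximum of $\phi$, the Hermitian Hessian inequalities $\phi_{z_i\bar{z}_j}\leq 0$ and $\phi_{w_i\bar{w}_j}\leq 0$ give $\det g_\mathbb{C} \leq \det\tilde g_\mathbb{C}$ and $\det g_\mathbb{H} \geq \det\tilde g_\mathbb{H}$; inserting this into the right-hand side of (\ref{e:potential eq}) with $c(t) = st + s\log\tfrac{3}{4}$ (after absorbing the constant $\sum_i \log b_i$ by a one-time shift of $\phi$) yields
\begin{align*}
\bigl(\partial_t + 1\bigr)\max\phi(t) \leq -\sum_i \log\!\left(1 + \tfrac{4a_i - 3}{3}e^{-t}\right) \leq M e^{-t},
\end{align*}
for a constant $M = M(a_i)$. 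Multiplying by $e^t$ and integrating gives $e^t\max\phi(t) \leq |\max\phi(0)| + Mt$, which is the desired $\max\phi(t) \leq C_0 e^{-t}(1+Ct)$. The symmetric argument at a minimum of $\phi$ reverses every inequality and gives $\min\phi(t) \geq -C_0 e^{-t}(1+Ct)$.

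The main obstacle I anticipate is bookkeeping the additive constant $\sum_i \log b_i$ coming from $\det\tilde g_\mathbb{C}/\det h_\mathbb{C}$: without care this would obstruct the decay of $\phi$ to zero. Since $\phi$ is only determined by $\omega$ up to a time-dependent constant, and shifting $\phi$ corresponds to an equivalent additive adjustment of $c(t)$, one resolves this by a one-time normalization of $\phi(0)$, equivalently by adjusting $c(t)$ by $-\sum_i\log b_i$. The improvement over Lemma \ref{l:C0 bound} is then transparent: in that setting the spatial variation of $\det g_0^\mathbb{C}/\det h_\mathbb{C}$ forced $c_1$ to be a genuine spatial supremum, which could not be absorbed symmetrically into the lower bound, whereas the model reference flow here produces space-independent determinant ratios, allowing a clean two-sided decaying estimate.
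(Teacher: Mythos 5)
Your proposal is correct and follows essentially the same route as the paper: take the reference flow to be the explicit model flow $\omega_h^{a,b}(t)$, observe that the determinant ratios in \eqref{e:potential eq} become spatially constant, and rerun the maximum/minimum principle argument of Lemma \ref{l:C0 bound} to get decay on both sides. Your explicit bookkeeping of the residual additive constant $\sum_i\log b_i$ (absorbed by renormalizing $c(t)$ or, equivalently, by measuring the determinant ratios against $h^{a,b}$ rather than $h$) is in fact slightly more careful than the paper's proof, which handles this implicitly by declaring the background metric to be $h^{a,b}$.
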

\begin{proof}
    We can choose the initial representative $\omega_0 = \omega^{a,b}_h$. In this case, $\phi(0)$ is a function satisfies $\omega(0)=\omega_0 +\sqrt{-1}(\partial_z\partial_{\bar{z}} - \partial_w \partial_{\bar{w}})\phi(0)$. Also, note that $\rho^{1,1}(\omega^{a,b}_h) = P$. Thus, we can choose the representative cohomological background metric $h$ to be $h^{a,b}$ as well. The rest of the proof is the same as Lemma \ref{l:C0 bound}. Note that in this case, we do not need to choose the normalization constant $c_1$ since the initial background metric is exactly the model metric $\omega^{a,b}_h$.
\end{proof}
\begin{rmk}
    The choice of $c(t)$ can be seen in a more geometric way. If we denote $h(t)$ to be the normalized model flow of the model flow in Section \ref{s:model cases}, then we see that $P = \rho^{1,1}(\omega_h(t))$, which is a constant. With the choice of $c(t) = st+s\log{\frac{3}{4}}$. Thus, we can express:
    \begin{align*}
        \frac{\partial}{\partial t}\phi(t)+\phi(t)=\operatorname{log}\frac{\operatorname{det}g_\mathbb{C}(t)}{\operatorname{det}h_\mathbb{C}(t)}-\operatorname{log}\frac{\operatorname{det}g_\mathbb{H}(t)}{\operatorname{det}h_\mathbb{H}(t)},
    \end{align*}
    which is measured with respect to the moving model metric $h(t)$. And the choice of $\log c_1$ is a normalizing constant to match the initial condition. Since the Gromov-Hausdorff limit, at least from all the existing results, is independent of the initial metric. Then the potential with respect to the moving model metric should converge to 0 uniformly. Thus, the choice of $c(t)$ is not surprising, and if $\omega_0$ is in the cohomology class of $\omega_h$, then we will achieve better decaying $C^0$ estimate.
\end{rmk}
Also, we need the estimate for $\dot{\phi}$, the time derivative of the potential. To derive the equation, several lemmas are needed.
\begin{lemma}\label{l:split potential eq}
    For the metrics $g_\mathbb{H}$ and $g_\mathbb{C}$ on bundles $E_\mathbb{H}$ and $E_\mathbb{C}$ respectively, we have the following evolution equation:
    \begin{align}
        &(\partial_t -\Delta) \operatorname{log}\frac{\operatorname{det}g_{\mathbb{H}}}{\operatorname{det}h_{\mathbb{H}}} = \frac{1}{2}|T|^2 + \frac{1}{2}\operatorname{tr}_{g_\mathbb{H}}h_{\mathbb{H}} - s,\label{e:potential h}\\
        &(\partial_t -\Delta) \operatorname{log}\frac{\operatorname{det}g_{\mathbb{C}}}{\operatorname{det}h_{\mathbb{C}}} = \frac{1}{2}|T|^2 - \frac{1}{4}\operatorname{tr}_{g_\mathbb{H}}h_{\mathbb{H}} - s\label{e:potential c}.
    \end{align}
\end{lemma}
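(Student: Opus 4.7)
The plan is to derive both identities in parallel by exploiting that any commuting-type generalized K\"ahler metric $g$ on an OT manifold is block-diagonal with respect to $TM = E_\mathbb{H}\oplus E_\mathbb{C}$, which follows from the joint $I,J$-compatibility of $g$ forcing $g(X_\mathbb{H},Y_\mathbb{C}) = 0$. As a consequence $\det g = \det g_\mathbb{H}\cdot\det g_\mathbb{C}$ and the Chern Laplacian splits as $\Delta = \Delta_\mathbb{H} + \Delta_\mathbb{C}$, with $\Delta_\mathbb{H} = g_\mathbb{H}^{\bar{w}_j w_i}\partial_{w_i}\partial_{\bar{w}_j}$ and $\Delta_\mathbb{C}$ defined analogously. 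Using the normalized flow $\partial_t g = -\rho_B^{1,1} - g$ together with \eqref{e:bismut ricci 1,1 decomposition}, which identifies the $E_\mathbb{H}$-component of $\rho_B^{1,1}$ as $\sqrt{-1}\partial_w\bar{\partial}_w(\log\det g_\mathbb{C} - \log\det g_\mathbb{H})$ (and dually on $E_\mathbb{C}$), taking the trace over $E_\mathbb{H}$ yields $\partial_t\log\det g_\mathbb{H} = \Delta_\mathbb{H}\log\det g_\mathbb{H} - \Delta_\mathbb{H}\log\det g_\mathbb{C} - s$, together with the analogous formula for $g_\mathbb{C}$ (the $-s$ comes from the $-g$ in the normalized flow and the complex dimension).

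Next I would compute the background Laplacians. Since $\log\det h_\mathbb{H} = -2\sum_i\log\operatorname{Im} w_i$ and $\log\det h_\mathbb{C} = \sum_i\log\operatorname{Im} w_i$ depend only on $w$, their $\Delta_\mathbb{C}$-parts vanish, and the elementary identity $\partial_{w_i}\partial_{\bar{w}_j}\log\operatorname{Im} w_k = -\delta_{ij}\delta_{jk}/(4(\operatorname{Im} w_k)^2)$ combined with the explicit form $h_{w_i\bar{w}_j} = \delta_{ij}/(\operatorname{Im} w_i)^2$ gives $\Delta_\mathbb{H}\log\det h_\mathbb{H} = \tfrac{1}{2}\tr_{g_\mathbb{H}}h_\mathbb{H}$ and $\Delta_\mathbb{H}\log\det h_\mathbb{C} = -\tfrac{1}{4}\tr_{g_\mathbb{H}}h_\mathbb{H}$. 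Subtracting $\Delta$ applied to the respective background log-determinants from the time derivatives gives
\begin{align*}
(\partial_t - \Delta)\log\tfrac{\det g_\mathbb{H}}{\det h_\mathbb{H}} &= -\Delta_\mathbb{H}\log\det g_\mathbb{C} - \Delta_\mathbb{C}\log\det g_\mathbb{H} + \tfrac{1}{2}\tr_{g_\mathbb{H}}h_\mathbb{H} - s,\\
(\partial_t - \Delta)\log\tfrac{\det g_\mathbb{C}}{\det h_\mathbb{C}} &= -\Delta_\mathbb{H}\log\det g_\mathbb{C} - \Delta_\mathbb{C}\log\det g_\mathbb{H} - \tfrac{1}{4}\tr_{g_\mathbb{H}}h_\mathbb{H} - s.
\end{align*}
Both claims of the lemma thus reduce to the single pointwise identity $-\Delta_\mathbb{H}\log\det g_\mathbb{C} - \Delta_\mathbb{C}\log\det g_\mathbb{H} = \tfrac{1}{2}|T|^2$.

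The main step, and the one I expect to require the most care, is this torsion identity. The Apostolov-Gualtieri leafwise K\"ahler conditions eliminate the pure torsion components $T_{w_a w_b\bar{w}_c}$ and $T_{z_a z_b\bar{z}_c}$, so the only nonvanishing Chern torsion of $g$ consists of the mixed components $T_{w_a z_b\bar{w}_c} = -\partial_{z_b}g_{w_a\bar{w}_c}$ and $T_{w_a z_b\bar{z}_c} = \partial_{w_a}g_{z_b\bar{z}_c}$ together with their skew partners. Expanding $\Delta_\mathbb{H}\log\det g_\mathbb{C}$ and $\Delta_\mathbb{C}\log\det g_\mathbb{H}$ by the chain rule produces a second-derivative-of-metric contribution plus a first-derivative-squared contribution (from differentiating the inverse metric). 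The pluriclosed condition $\partial\bar{\partial}\omega = 0$ restricted to the mixed $(1,1;1,1)$ bi-type in the $(w,z)$-decomposition yields the crucial relation $\partial_{z_a}\partial_{\bar{z}_b}g_{w_c\bar{w}_d} = -\partial_{w_c}\partial_{\bar{w}_d}g_{z_a\bar{z}_b}$, whose sign flip is exactly what makes the two second-derivative contributions cancel in the sum $-\Delta_\mathbb{H}\log\det g_\mathbb{C} - \Delta_\mathbb{C}\log\det g_\mathbb{H}$. The surviving first-derivative-squared terms then regroup, via the torsion formulas above and the block-diagonal inverse metric, into exactly $\tfrac{1}{2}|T|^2$. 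The delicate bookkeeping lies in matching the surviving contractions to the definition of $|T|^2$ with the correct symmetry factors arising from the antisymmetry of $T$; as a sanity check, on the model metric $h$ both sides of the identity evaluate to $s/4$.
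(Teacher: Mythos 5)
Your proposal is correct, and it arrives at both identities by a route that is logically equivalent to, but considerably more self-contained than, the paper's. The paper's proof is essentially a citation: it computes the partial Ricci forms of the background metric (which, since $\det h_{\mathbb H}$ and $\det h_{\mathbb C}$ depend only on $w$, are supported on the $dw\wedge d\bar w$ block), traces them against $g$ to produce the $\tfrac12\operatorname{tr}_{g_{\mathbb H}}h_{\mathbb H}$ and $-\tfrac14\operatorname{tr}_{g_{\mathbb H}}h_{\mathbb H}$ terms, and then invokes Lemma 4.1 of \cite{StreetsSTB} for the general split-tangent-bundle evolution equation, which is where the $\tfrac12|T|^2$ enters. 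What you do instead is rederive the content of that cited lemma: your reduction of both equations to the single pointwise identity $-\Delta_{\mathbb H}\log\det g_{\mathbb C}-\Delta_{\mathbb C}\log\det g_{\mathbb H}=\tfrac12|T|^2$ --- proved by combining the vanishing of the pure torsion components (leafwise K\"ahler), the mixed-bitype consequence $\partial_{z}\partial_{\bar z}g_{w\bar w}=-\partial_{w}\partial_{\bar w}g_{z\bar z}$ of pluriclosedness, and the regrouping of the surviving first-derivative terms --- is exactly the computation hidden inside that citation. Your intermediate formulas all check out: block-diagonality of $g$ from $I,J$-compatibility, the splitting $\Delta=\Delta_{\mathbb H}+\Delta_{\mathbb C}$, the derivation of $\partial_t\log\det g_{\mathbb H}$ and $\partial_t\log\det g_{\mathbb C}$ from \eqref{e:bismut ricci 1,1 decomposition}, the background Laplacians $\Delta_{\mathbb H}\log\det h_{\mathbb H}=\tfrac12\operatorname{tr}_{g_{\mathbb H}}h_{\mathbb H}$ and $\Delta_{\mathbb H}\log\det h_{\mathbb C}=-\tfrac14\operatorname{tr}_{g_{\mathbb H}}h_{\mathbb H}$, and the model-metric normalization $|T|^2=s/2$ consistent with your sanity check. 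The trade-off is the expected one: the paper is shorter but opaque without \cite{StreetsSTB} in hand, while your argument is verifiable in place at the cost of the torsion bookkeeping you flag; the only step requiring real care in a full write-up is confirming that the surviving contractions $g^{\bar w w}g^{\bar z z}g^{z\bar z}|T_{wz\bar z}|^2+g^{\bar z z}g^{\bar w w}g^{w\bar w}|T_{wz\bar w}|^2$ match the summation convention defining $|T|^2$ (a full sum over ordered index pairs), which is precisely what produces the factor $\tfrac12$.
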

\begin{proof}
    By the computation of Proposition \ref{p:modelcurvature}, for metrics on ($E_\mathbb{H}$ and $E_\mathbb{C}$), we have:
    \begin{align*}
        &\rho(h_\mathbb{H}) = \sum_{i=1}^s\frac{1}{4}\frac{1}{(\operatorname{Im}w_i)^2}dw_i\wedge d\bar{w}_i\\
        &\rho(h_\mathbb{C}) = \sum_{i=1}^s-\frac{1}{2}\frac{1}{(\operatorname{Im}w_i)^2}dw_i\wedge d\bar{w}_i
    \end{align*}
    Thus, by computing the trace, we have $\operatorname{tr}_g \rho_\mathbb{H} = \frac{1}{4}\operatorname{tr}_{g_\mathbb{H}}h_\mathbb{H}$ and $\operatorname{tr}_g \rho_\mathbb{C} = -\frac{1}{2}\operatorname{tr}_{g_\mathbb{H}}h_\mathbb{H}$. By using Lemma 4.1 from \cite{StreetsSTB}, it is done.
\end{proof}

A direct corollary of the above evolution equations is the a priori lower bound for the metric on $E_\mathbb{H}$:
\begin{cor}\label{c:lower z}
    There exists a constant $C$, depends on the initial metric, $g_0$, such that, on $E_\mathbb{H}$:
    \begin{align*}
        g_\mathbb{H}\geq C h_\mathbb{H}.
    \end{align*}
\end{cor}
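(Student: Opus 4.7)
The plan is to show the matrix inequality by obtaining a uniform upper bound on the trace $\operatorname{tr}_{g_\mathbb{H}} h_\mathbb{H}$.  The observation is that such a trace bound directly implies the matrix inequality: if $\operatorname{tr}_{g_\mathbb{H}} h_\mathbb{H} \leq K$ pointwise, then every eigenvalue of $g_\mathbb{H}^{-1} h_\mathbb{H}$ is at most $K$, equivalently $h_\mathbb{H} \leq K g_\mathbb{H}$, i.e.\ $g_\mathbb{H} \geq K^{-1} h_\mathbb{H}$.

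To get the trace bound I would compute the evolution of $\operatorname{tr}_{g_\mathbb{H}} h_\mathbb{H}$ along the normalized pluriclosed flow.  Since Lemma \ref{l:partial trace} records this evolution for the unnormalized flow, the only adjustment is to account for the extra $-\omega$ term in \eqref{e:PCF_normalized}; a short computation shows it contributes an additional $+\operatorname{tr}_{g_\mathbb{H}} h_\mathbb{H}$ term.  Combining the resulting identity with the explicit form of the Chern curvature of $h$ on $E_\mathbb{H}$ furnished by Proposition \ref{p:modelcurvature}, exactly as in the proof of Theorem \ref{t:mainthm1}, yields the Riccati-type differential inequality
\begin{align*}
\left(\partial_t - \Delta\right) \operatorname{tr}_{g_\mathbb{H}} h_\mathbb{H} \leq -\tfrac{1}{2}\bigl( \operatorname{tr}_{g_\mathbb{H}} h_\mathbb{H} \bigr)^2 + \operatorname{tr}_{g_\mathbb{H}} h_\mathbb{H}.
\end{align*}
The parabolic maximum principle, together with the ODE comparison $y' = -y^2/2 + y$ whose trajectories decrease whenever $y > 2$, then gives $\operatorname{tr}_{g_\mathbb{H}} h_\mathbb{H} \leq \max\{2,\ \sup_M \operatorname{tr}_{g_\mathbb{H}(0)} h_\mathbb{H}\}$ for all $t \geq 0$, with the constant depending only on $g_0$.

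As an alternative one can bypass the new computation entirely by rescaling time via $s = e^t - 1$ to convert the normalized flow to the unnormalized flow \eqref{e:PCF}: setting $\til g_\mathbb{H}(s) = (s+1) g_\mathbb{H}(t)$, the bound $\operatorname{tr}_{\til g_\mathbb{H}(s)} h_\mathbb{H} \leq (C_0 + s/2)^{-1}$ derived in the proof of Theorem \ref{t:mainthm1} unfolds to $\operatorname{tr}_{g_\mathbb{H}(t)} h_\mathbb{H} \leq (s+1)/(C_0 + s/2)$, which is uniformly bounded (tending to $2$ as $s \to \infty$).  No step presents a significant obstacle: the crucial input is the negativity of the Chern curvature of $h$ in the $E_\mathbb{H}$-directions from Proposition \ref{p:modelcurvature}, which produces a quadratic reaction term in the evolution of $\operatorname{tr}_{g_\mathbb{H}} h_\mathbb{H}$ strong enough to absorb the linear contribution coming from the normalization.
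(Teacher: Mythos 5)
Your proof is correct, but it takes a genuinely different route from the paper. The paper stays inside the GKRF scalar-reduction framework: it takes the evolution equation \eqref{e:potential h} for $\log(\det g_{\mathbb H}/\det h_{\mathbb H})$ from Lemma \ref{l:split potential eq}, discards the nonnegative torsion term $\tfrac12|T|^2$, closes the inequality with AM--GM ($\operatorname{tr}_{g_{\mathbb H}}h_{\mathbb H}/s \geq (\det h_{\mathbb H}/\det g_{\mathbb H})^{1/s}$), and concludes by ODE comparison that $\det g_{\mathbb H}/\det h_{\mathbb H}$ is bounded below. You instead recycle the trace estimate from the long-time existence argument (Lemma \ref{l:partial trace} plus the negative Chern curvature of $h$ on $E_{\mathbb H}$ from Proposition \ref{p:modelcurvature}), transported to the normalized flow either by adding the $+\operatorname{tr}_{g_{\mathbb H}}h_{\mathbb H}$ normalization term or by the time-rescaling $s=e^t-1$; both of your variants are valid. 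Your approach buys two things: it needs no generalized K\"ahler structure at all (it works for arbitrary pluriclosed initial data, since it never invokes the scalar potential), and the upper bound on $\operatorname{tr}_{g_{\mathbb H}}h_{\mathbb H}$ \emph{directly} yields the matrix inequality $g_{\mathbb H}\geq C h_{\mathbb H}$ by the eigenvalue observation you make at the outset --- whereas a lower bound on the determinant ratio (or on $\operatorname{tr}_{h_{\mathbb H}}g_{\mathbb H}$) implies the matrix inequality on its own only when $s=1$, and for $s>1$ must be supplemented by exactly the trace upper bound you prove. What the paper's route buys is the explicit asymptotic $(\tfrac12+C_0e^{-t})^s$ and the appearance of the torsion term, which feed into the later potential estimates. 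One small caveat on your first variant: for $s>1$ the curvature term gives $-\tfrac{1}{2}\sum_i (g^{w_i\bar w_i}h_{w_i\bar w_i})^2 \leq -\tfrac{1}{2s}(\operatorname{tr}_{g_{\mathbb H}}h_{\mathbb H})^2$ after Cauchy--Schwarz, so the Riccati coefficient is $\tfrac{1}{2s}$ rather than $\tfrac12$; this changes only the value of the resulting bound, not its existence.
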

\begin{proof}
    On $E_\mathbb{H}$, by the AM-GM inequality:
    \begin{align*}
        \operatorname{tr}_{g_\mathbb{H}}h_\mathbb{H}/s \geq (\frac{\operatorname{det}h_\mathbb{H}}{\operatorname{det}g_\mathbb{H}})^{\frac{1}{s}}.
    \end{align*}
    Thus, for \eqref{e:potential h}, we have:
    \begin{align*}
        (\partial_t -\Delta) \operatorname{log}\frac{\operatorname{det}g_{\mathbb{H}}}{\operatorname{det}h_{\mathbb{H}}} \geq \frac{1}{2}(\frac{\operatorname{det}h_\mathbb{H}}{\operatorname{det}g_\mathbb{H}})^{\frac{1}{s}}s -s.
    \end{align*}
    By the ODE comparison, we have:
    \begin{align*}
        \operatorname{tr}_{h_\mathbb{H}}g_\mathbb{H}\geq s (\frac{\operatorname{det}g_\mathbb{H}}{\operatorname{det}h_\mathbb{H}})^{\frac{1}{s}}\geq (\frac{1}{2} + C_0 e^{-t})^s\geq C.
    \end{align*}
\end{proof}
Now, we look at equation of $\dot{\phi}$:
\begin{lemma}
    For the time derivative of the potential, $\dot{\phi}$, we have:
    \begin{align*}
        (\frac{\partial}{\partial t} - \Delta)(\dot{\phi} + \phi) = 
        -\frac{3}{4}\operatorname{tr}_{g_\mathbb{H}}h_\mathbb{H}+s.
    \end{align*}
\end{lemma}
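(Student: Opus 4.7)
The plan is to simply apply the heat operator $\partial_t - \Delta$ to both sides of the scalar potential equation \eqref{e:potential eq} and then invoke Lemma \ref{l:split potential eq} to substitute. By \eqref{e:potential eq},
\begin{align*}
\dot\phi + \phi = \log\frac{\det g_\mathbb{C}}{\det h_\mathbb{C}} - \log\frac{\det g_\mathbb{H}}{\det h_\mathbb{H}} + c(t),
\end{align*}
so applying $\partial_t - \Delta$ reduces the problem to computing this operator on each of the three terms on the right.

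For the first two terms, Lemma \ref{l:split potential eq} gives exactly
\begin{align*}
(\partial_t - \Delta)\log\frac{\det g_\mathbb{C}}{\det h_\mathbb{C}} &= \tfrac{1}{2}|T|^2 - \tfrac{1}{4}\operatorname{tr}_{g_\mathbb{H}}h_\mathbb{H} - s, \\
(\partial_t - \Delta)\log\frac{\det g_\mathbb{H}}{\det h_\mathbb{H}} &= \tfrac{1}{2}|T|^2 + \tfrac{1}{2}\operatorname{tr}_{g_\mathbb{H}}h_\mathbb{H} - s.
\end{align*}
Subtracting, the $|T|^2$ and $-s$ contributions cancel, leaving $-\tfrac{3}{4}\operatorname{tr}_{g_\mathbb{H}}h_\mathbb{H}$.

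For the constant term, since $c(t)$ depends only on time and has $c'(t) = s$ (for any of the choices made in Lemma \ref{l:C0 bound} or Corollary \ref{c:imporved C0}), we get $(\partial_t - \Delta)c(t) = s$. Adding this to the previous computation yields the claimed identity. There is no real obstacle here; the computation is a direct differentiation of the scalar reduction, and the key mechanism (cancellation of $|T|^2$ between the $\mathbb{C}$ and $\mathbb{H}$ factors) is exactly what the commuting-type split structure is designed to produce.
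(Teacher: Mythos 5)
Your proposal is correct and follows the paper's proof exactly: apply $\partial_t - \Delta$ to the scalar reduction \eqref{e:potential eq}, substitute the two evolution equations of Lemma \ref{l:split potential eq} so that the $\tfrac{1}{2}|T|^2$ and $-s$ terms cancel leaving $-\tfrac{1}{4}\operatorname{tr}_{g_\mathbb{H}}h_\mathbb{H} - \tfrac{1}{2}\operatorname{tr}_{g_\mathbb{H}}h_\mathbb{H}$, and add $c'(t)=s$. No differences worth noting.
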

\begin{proof}
    Using \eqref{e:potential eq} and Lemma \ref{l:split potential eq}, we have:
    \begin{align*}
        (\frac{\partial}{\partial t} - \Delta)(\dot{\phi} + \phi) = -\frac{1}{4}\operatorname{tr}_{g_\mathbb{H}}h_\mathbb{H} - \frac{1}{2}\operatorname{tr}_{g_\mathbb{H}}h_\mathbb{H} + \frac{\partial}{\partial t}c(t) = -\frac{3}{4}\operatorname{tr}_{g_\mathbb{H}}h_\mathbb{H}+s.
    \end{align*}
\end{proof}
Now, we derive the upper bound estimate for $\dot{\phi}$:
\begin{lemma}\label{l:potential derivative upper bound}
    There exists a constant $C>0$, such that:
    \begin{align*}
        \dot{\phi} \leq C. 
    \end{align*}
\end{lemma}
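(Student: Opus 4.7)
The plan is to derive a parabolic inequality of the form $(\partial_t - \Delta)\dot\phi + \dot\phi \leq A$ for $\dot\phi$ and close the argument via an exponentially-weighted maximum principle.

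First, I would compute $(\partial_t - \Delta)\phi$ using the splitting structure. Since the GKRF preserves the commuting-type GK structure, both $g(t)$ and $\til g(t)$ split as $g_\mathbb{C}\oplus g_\mathbb{H}$ with respect to $TM = E_\mathbb{C}\oplus E_\mathbb{H}$, so there are no cross components in $g^{\bar j i}$. Starting from $\omega = \til\omega + \sqrt{-1}(\partial_z\partial_{\bar z} - \partial_w\partial_{\bar w})\phi$ and contracting with the inverse metric componentwise using $\operatorname{tr}_{g_\mathbb{C}}\omega_\mathbb{C} = \operatorname{tr}_{g_\mathbb{H}}\omega_\mathbb{H} = s$, I get $\Delta\phi = \operatorname{tr}_{g_\mathbb{H}}\til g_\mathbb{H} - \operatorname{tr}_{g_\mathbb{C}}\til g_\mathbb{C}$, where the sign flip comes from the $-\partial_w\partial_{\bar w}$ term. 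Thus $(\partial_t - \Delta)\phi = \dot\phi - \operatorname{tr}_{g_\mathbb{H}}\til g_\mathbb{H} + \operatorname{tr}_{g_\mathbb{C}}\til g_\mathbb{C}$.

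Second, I would subtract this from the evolution equation for $\dot\phi + \phi$ established in the preceding lemma, obtaining
\[
    (\partial_t - \Delta)\dot\phi + \dot\phi = s - \tfrac{3}{4}\operatorname{tr}_{g_\mathbb{H}}h_\mathbb{H} + \operatorname{tr}_{g_\mathbb{H}}\til g_\mathbb{H} - \operatorname{tr}_{g_\mathbb{C}}\til g_\mathbb{C}.
\]
The term $-\tfrac{3}{4}\operatorname{tr}_{g_\mathbb{H}}h_\mathbb{H}$ is nonpositive and may be dropped in an upper bound, and likewise for $-\operatorname{tr}_{g_\mathbb{C}}\til g_\mathbb{C}$. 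For the remaining term I would exploit the explicit formula $\til g_\mathbb{H}(t) = e^{-t}(\til g_0)_\mathbb{H} + (1-e^{-t})\tfrac{3}{4}h_\mathbb{H}$, which gives $\til g_\mathbb{H}(t) \leq C_1 h_\mathbb{H}$ uniformly in $t$, combined with Corollary \ref{c:lower z} giving $g_\mathbb{H} \geq C_2 h_\mathbb{H}$, to conclude $\operatorname{tr}_{g_\mathbb{H}}\til g_\mathbb{H} \leq A$ for a constant $A$ depending only on $g_0$ and $h$.

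With $(\partial_t - \Delta)(\dot\phi - A) + (\dot\phi - A) \leq 0$ in hand, I would set $F := e^t(\dot\phi - A)$ and compute $(\partial_t - \Delta)F = e^t\bigl[(\partial_t - \Delta)(\dot\phi - A) + (\dot\phi - A)\bigr] \leq 0$. The parabolic maximum principle on the compact manifold then yields $\max_M F(t) \leq \max_M F(0) = \max_M \dot\phi(0) - A$. Since $\dot\phi(0)$ is bounded via the scalar equation \eqref{e:potential eq}, rearranging gives $\dot\phi(t) \leq A + e^{-t}\bigl(\max_M \dot\phi(0) - A\bigr) \leq C$. The main delicacy I expect lies in the computation of $\Delta\phi$ on the mixed $(+,-)$ potential: the sign discrepancy between the $\partial_z\partial_{\bar z}$ and $-\partial_w\partial_{\bar w}$ contributions must be tracked carefully, and the absence of cross components $g^{z_i\bar w_j}$—itself a consequence of the commuting GK condition being preserved along the flow—is what allows the trace to split cleanly into the $\mathbb{C}$ and $\mathbb{H}$ pieces and yields the crucial $-\dot\phi$ term on the right-hand side.
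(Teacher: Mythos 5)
Your argument is correct and is essentially the paper's: the paper applies the maximum principle to $\dot\phi-(C_1-1)\phi$ with $C_1>1$, built from the same three ingredients you use (the evolution equation for $\dot\phi+\phi$, the expression of $\Delta\phi$ through $\operatorname{tr}_{g_\mathbb{H}}\til{g}_\mathbb{H}$ and $\operatorname{tr}_{g_\mathbb{C}}\til{g}_\mathbb{C}$, and the bound $g_\mathbb{H}\geq C h_\mathbb{H}$ of Corollary \ref{c:lower z}), with the resulting $-C_1\dot\phi$ term supplying the negativity at a large maximum exactly as your $e^t$-weight does. Your version is the case $C_1=1$ and is marginally cleaner in that it bypasses the $C^0$ estimate of Lemma \ref{l:C0 bound}.
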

\begin{proof}
    Note that, from \eqref{e:ddbar lemma for GK}, the Chern Laplacian on potential $\phi$ can be purely expressed as the geometric quantity:
    \begin{align*}
        \Delta\phi &= \operatorname{tr}_{g_\mathbb{C}}\partial_z\partial_{\bar{z}}\phi + \operatorname{tr}_{g_\mathbb{H}}\partial_w\partial_{\bar{w}}\phi = \operatorname{tr}_{g_\mathbb{C}}(g_\mathbb{C} - \til{g}_\mathbb{C}) + \operatorname{tr}_{g_\mathbb{H}}(\til{g}_\mathbb{H}-g_\mathbb{H})\\
        & = -e^{-t}\operatorname{tr}_{g_\mathbb{C}}g^0_\mathbb{C} + e^{-t}\operatorname{tr}_{g_\mathbb{H}}g^0_\mathbb{H} + (1-e^{-t})\frac{3}{4}\operatorname{tr}_{g_\mathbb{H}}h_\mathbb{H}.
    \end{align*}

    Consider the quantity $\dot{\phi} - (C_1-1)\phi = \dot{\phi}+\phi - C_1\phi$, for some constant $C_1>1$, which will be chosen later. Then:
    \begin{align*}
        (\frac{\partial}{\partial t} - \Delta)(\dot{\phi}- (C_1 -1)\phi) = &-\frac{3}{4}\operatorname{tr}_{g_\mathbb{H}}h_\mathbb{H} + s - C_1\phi\\
        &-C_1e^{-t}\operatorname{tr}_{g_\mathbb{C}}g^0_\mathbb{C} + C_1e^{-t}\operatorname{tr}_{g_\mathbb{H}}g^0_\mathbb{H} + C_1(1-e^{-t})\frac{3}{4}\operatorname{tr}_{g_\mathbb{H}}h_\mathbb{H}\\
        =&s-C_1\dot{\phi}-C_1e^{-t}\operatorname{tr}_{g_\mathbb{C}}g^0_\mathbb{C} + C_1e^{-t}\operatorname{tr}_{g_\mathbb{H}}g^0_\mathbb{H} + (C_1(1-e^{-t})-1)\frac{3}{4}\operatorname{tr}_{g_\mathbb{H}}h_\mathbb{H}.
    \end{align*}
    If for $A>0$, a very large constant, such that $\dot{\phi} - (C_1-1)\phi>A$. Then, by Lemma \ref{l:C0 bound}, 
    \begin{align*}
        \dot{\phi}\geq (C_1-1)C_0+A = A'
    \end{align*}
    for some large constant $A'>0$. By the lower bound of the metric on $E_\mathbb{H}$ (Corollary \ref{c:lower z}), we have:
    \begin{align*}
         (\frac{\partial}{\partial t} - \Delta)(\dot{\phi}- (C_1 -1)\phi)\leq s+C_1C_0 - C_1\dot{\phi},
    \end{align*}
    for some constant $C_0 = C_0(s,g(0))>0$. Thus, for $A$ large, by the maximum principle:
    \begin{align*}
        \dot{\phi} - (C_1 - 1)\phi \leq A.
    \end{align*}
    By Lemma \ref{l:C0 bound} again, done.
\end{proof}

\subsection{Results on the Inoue surface}

When $s = 1$, then the OT manifolds $X(K,U)$ become the well-known Inoue surfaces of type $S_M$. On these surfaces, several estimates can be strengthened.  First note that by Theorem \ref{t:haojosh}, for any GK initial metric $\omega_0$, there will be constants $a$ and $b$ such that $\omega_0\in[\omega^{a,b}_h]$.
Thus, we always have the improved $C^0$ estimate for $\phi$ from Corollary \ref{c:imporved C0}.
\begin{cor}\label{c:Inoue C0}
    Let $\omega_0$ be any GK metrics on the Inoue surface $S_M$. Then, if we choose $c(t) = t + \log(\frac{3}{4})$, then:
    \begin{align*}
        -C_0e^{-t}(1+Ct)\leq\phi(t)\leq C_0 e^{-t}(1+Ct),
    \end{align*}
    for some $C_0$, $C>0$.
\end{cor}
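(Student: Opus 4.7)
The plan is to combine two results already established in the paper: the classification of the relevant cohomology in the Inoue surface case, and the refined $C^0$ estimate that holds whenever the initial class is represented by a model metric.

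First, I would invoke Theorem \ref{t:haojosh}, which states that on the Inoue surface $S_M$ the cohomology group $\mathcal{H}(S_M)$ is spanned by the classes $[\omega_h^{a,b}]$ with $a,b \in \mathbb{R}$. In particular, given any generalized K\"ahler metric $\omega_0$ on $S_M$, its cohomology class in $\mathcal{H}(S_M)$ must equal $[\omega_h^{a,b}]$ for some choice of constants $a,b$. Thus the hypothesis of Corollary \ref{c:imporved C0} is automatically satisfied for every GK initial datum on the Inoue surface.

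Next, I would specialize Corollary \ref{c:imporved C0} to the case $s=1$. The normalizing constant in that corollary is $c(t) = st + s\log\frac{3}{4}$, which becomes $c(t) = t + \log\frac{3}{4}$ precisely as stated. Applying the corollary to the representative $\omega_0 = \omega_h^{a,b}$ (which is allowed by the classification above), we obtain both the upper and lower bounds
\begin{align*}
    -C_0 e^{-t}(1+Ct) \leq \phi(t) \leq C_0 e^{-t}(1+Ct),
\end{align*}
completing the proof.

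There is essentially no new obstacle here: the only subtle point is verifying that Theorem \ref{t:haojosh} genuinely provides the hypothesis needed for Corollary \ref{c:imporved C0}, namely that a representative of the form $\omega_h^{a,b}$ exists in $[\omega_0]$. Since Theorem \ref{t:haojosh} gives exactly this structural statement about the full cohomology group for complex surfaces with split tangent bundle, the corollary follows by direct citation, and the statement is truly a consequence of the two preceding results with no additional analysis required.
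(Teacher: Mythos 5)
Your proposal matches the paper's own argument exactly: the paper likewise deduces Corollary \ref{c:Inoue C0} by citing Theorem \ref{t:haojosh} to place $[\omega_0]$ in some class $[\omega_h^{a,b}]$ and then applying Corollary \ref{c:imporved C0} with $s=1$, so that $c(t)=t+\log\frac{3}{4}$. No further comment is needed.
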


Secondly, when $s = 1$, then $E_\mathbb{C}$ and $E_\mathbb{H}$ are holomorphic line bundles. Thus, the AM-GM inequality becomes equality in this case, and we can have the following lower bound of $\dot{\phi}$.
\begin{lemma}\label{l:potential derivative lower bound}
    On Inoue surface $S_M$, for the potential $\phi$, there exists a constant $C>0$, such that:
    \begin{align*}
        \dot{\phi}\geq -C.
    \end{align*}
\end{lemma}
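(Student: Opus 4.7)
The plan is to apply a parabolic minimum principle to the auxiliary function
\[
F := \dot\phi + A\phi
\]
for a constant $A > 1$, show that $F_{\min}(t)$ is bounded below by a uniform constant, and combine this with the $C^0$ bound on $\phi$ from Corollary \ref{c:Inoue C0} to conclude $\dot\phi \geq -C$. The central point is that in the line-bundle case $s=1$ the Chern Laplacian $\Delta\phi$ becomes a scalar expression whose leading term blows up exponentially precisely when $\dot\phi$ is very negative, providing the barrier needed for the minimum principle.

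At a spatial minimum with $F_{\min}(t) = -N$ large, the $C^0$ bound forces $\dot\phi \approx -N$. The scalar potential equation $\dot\phi + \phi = \log(g_{z\bar z}/h_{z\bar z}) - \log(g_{w\bar w}/h_{w\bar w}) + t + \log(3/4)$, together with Corollary \ref{c:lower z} and the bound $\operatorname{tr}_{g_\mathbb{H}}h_\mathbb{H} \leq C$ recalled from the proof of Theorem \ref{t:mainthm1}, then implies
\[
\frac{g_{z\bar z}}{h_{z\bar z}} \,\lesssim\, e^{-N-t} \quad \text{at } x_{\min},
\]
so that $e^{-t}g^0_{z\bar z}/g_{z\bar z} \gtrsim e^{N}$. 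In $s=1$ the Chern Laplacian takes the scalar form
\[
\Delta\phi = -e^{-t}\frac{g^0_{z\bar z}}{g_{z\bar z}} + e^{-t}\frac{g^0_{w\bar w}}{g_{w\bar w}} + \tfrac{3}{4}(1-e^{-t})\frac{h_{w\bar w}}{g_{w\bar w}},
\]
so that $-\Delta\phi \gtrsim e^{N}$ at the minimum.

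Substituting into
\[
(\partial_t - \Delta)F \;=\; \bigl(1 - \tfrac{3}{4}\operatorname{tr}_{g_\mathbb{H}}h_\mathbb{H}\bigr) + (A-1)\dot\phi - (A-1)\Delta\phi,
\]
the $-(A-1)\Delta\phi$ term produces a contribution of order $(A-1)e^{N}$, which dominates the linear-in-$N$ contribution from $(A-1)\dot\phi \approx -(A-1)N$ and the uniformly bounded contributions from the $e^{-v}$-terms. Hence there exist uniform constants $c, N_0 > 0$ such that
\[
(\partial_t - \Delta)F\big|_{x_{\min}(t)} \;\geq\; c\, e^{-F_{\min}(t)} \qquad \text{whenever} \quad F_{\min}(t) \leq -N_0.
\]
The parabolic minimum principle $F'_{\min}(t) \geq (\partial_t - \Delta)F|_{x_{\min}}$ rearranges to $(e^{F_{\min}})'(t) \geq c$, so $F_{\min}$ is driven back above $-N_0$ on a uniform time scale; this gives $F \geq -C$ uniformly, and then $\dot\phi \geq -C - A|\phi| \geq -C'$.

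The main obstacle is controlling the quantity $\operatorname{tr}_{h_\mathbb{H}}g_\mathbb{H} = g_{w\bar w}/h_{w\bar w}$ at the spatial minimum of $F$: unbounded growth there would dilute the exponential blow-up of $g^0_{z\bar z}/g_{z\bar z}$. This is the point at which the $s=1$ structure is essential, because the AM-GM inequalities in Corollary \ref{c:lower z} become equalities and \eqref{e:potential h}, \eqref{e:potential c} become sharp scalar identities. In practice it is convenient to replace $F$ by $F = \dot\phi + A\phi + \log(g_{w\bar w}/h_{w\bar w})$, since with this choice the identity $u + t = F - (A-1)\phi - \log(3/4)$ eliminates the value of $v$ at the minimum from the estimate for $g_{z\bar z}$, and the added $v$-term contributes $\tfrac{1}{2}|T|^2 + \tfrac{1}{2}e^{-v} - 1$ to the evolution which, being either nonnegative or bounded, does not interfere with the exponential barrier argument.
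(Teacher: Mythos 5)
Your overall architecture --- a parabolic minimum principle applied to $\dot\phi + A\phi$, combined with the $s=1$ scalar identity $\dot\phi+\phi=\log(\tfrac34\operatorname{tr}_{g_\mathbb{H}}h_{\mathbb{H}})-\log(c_1e^{-t}\operatorname{tr}_{g_\mathbb{C}}h_{\mathbb{C}})$ --- is the same as the paper's, but the quantitative mechanism you rely on has a gap. Your barrier is the claim that $-\Delta\phi\gtrsim e^{N}$ at a very negative minimum, which requires $e^{-t}\operatorname{tr}_{g_\mathbb{C}}h_{\mathbb{C}}\gtrsim e^{N}$ there; by the scalar identity this needs a positive \emph{lower} bound on $\operatorname{tr}_{g_\mathbb{H}}h_{\mathbb{H}}=h_{w\bar{w}}/g_{w\bar{w}}$, i.e.\ an upper bound on $g_{w\bar{w}}/h_{w\bar{w}}$. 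The bounds you cite (Corollary \ref{c:lower z} and the bound $\operatorname{tr}_{g_\mathbb{H}}h_{\mathbb{H}}\le C$ from the proof of Theorem \ref{t:mainthm1}) go in the opposite direction: they bound $g_{\mathbb{H}}$ from \emph{below} by $h_{\mathbb{H}}$. A uniform upper bound for $g_{\mathbb{H}}$ is exactly the part of the conjectural picture the paper does not establish, so the exponential barrier is not available. You flag this obstacle yourself and propose adding $v=\log(g_{w\bar{w}}/h_{w\bar{w}})$ to the auxiliary function; this does decouple the estimate of $g_{z\bar{z}}$ from $v$ and recovers $e^{-t}\operatorname{tr}_{g_\mathbb{C}}h_{\mathbb{C}}\gtrsim e^{N}$ at the minimum, but it reintroduces the uncontrolled quantity through $\dot\phi=F-A\phi-v$, so the term $(A-1)\dot\phi$ is now $-(A-1)(N+v)+O(1)$ with $v$ unbounded above, and the comparison against $e^{N}$ is again unjustified.

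The paper's proof closes the loop differently and without any barrier that blows up. It takes $A=1+\tfrac1\Lambda$ with $\Lambda$ large and runs a first-touching-time argument at the level $-A_0$ with $A_0=\Lambda/2$: at the first time $\dot\phi+\phi+\tfrac1\Lambda\phi=-A_0$, the $C^0$ estimate gives $\tfrac1\Lambda\dot\phi\ge-\tfrac12-\tfrac{2C_0}{\Lambda}$, which is already beaten by the $+1$ coming from $(\partial_t-\Delta)(\dot\phi+\phi)=1-\tfrac34\operatorname{tr}_{g_\mathbb{H}}h_{\mathbb{H}}$; and the remaining bad term $-\tfrac34(1+O(\tfrac1\Lambda))\operatorname{tr}_{g_\mathbb{H}}h_{\mathbb{H}}$ is absorbed into the good term $+\tfrac1\Lambda e^{-t}\operatorname{tr}_{g_\mathbb{C}}g^0_{\mathbb{C}}$ via the scalar identity in the form $\tfrac34\operatorname{tr}_{g_\mathbb{H}}h_{\mathbb{H}}\le c_1e^{C_0/\Lambda-A_0}e^{-t}\operatorname{tr}_{g_\mathbb{C}}h_{\mathbb{C}}$, with $A_0$ chosen so large that $c_0e^{-A_0}\le\tfrac1\Lambda$. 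The point is that one never needs the good term to be large, only that the bad term is a small multiple of it, and this the identity supplies directly with no control on $g_{w\bar{w}}/h_{w\bar{w}}$. To repair your argument, replace the exponential barrier by this absorption and couple the (small) coefficient $A-1$ to the (large) touching threshold, rather than fixing $A>1$ and seeking a blow-up of $-\Delta\phi$.
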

\begin{proof}
    Note $s=1$. Choose a large constant $\Lambda>0$, consider the quantity $\dot{\phi} + \phi+\frac{1}{\Lambda}\phi$. Then
    \begin{align*}
        (\frac{\partial}{\partial t} - \Delta)(\dot{\phi}+\phi+\frac{1}{\Lambda}\phi) = 1 +\frac{1}{\Lambda}\dot{\phi}+ \frac{1}{\Lambda}e^{-t}\operatorname{tr}_{g_\mathbb{C}}g^0_\mathbb{C} - \frac{1}{\Lambda}e^{-t}\operatorname{tr}_{g_\mathbb{H}}g^0_\mathbb{H} - (\frac{1}{\Lambda}(1-e^{-t})+1)\frac{3}{4}\operatorname{tr}_{g_\mathbb{H}}h_\mathbb{H}.
    \end{align*}
    Now, for $A>0$, a very large constant, such that $\dot{\phi}+\phi +\frac{1}{\Lambda}\phi < -A$. By \eqref{e:potential eq}, notice that in this case $s=1$, we know:
    \begin{align*}
        \dot{\phi} +\phi = \log\frac{\det g_\mathbb{C}/c_1}{\det e^{-t}h_\mathbb{C}} - \log\frac{\det g_\mathbb{H}}{\det \frac{3}{4}h_\mathbb{H}} = -\log(c_1e^{-t}\operatorname{tr}_{g_\mathbb{C}}h_\mathbb{C}) + \log (\frac{3}{4}\operatorname{tr}_{g_\mathbb{H}}h_\mathbb{H}).
    \end{align*}
    By Corollary \ref{c:Inoue C0}, in particular $|\phi|\leq C_0$. Then, there is a large constant $A' = A- C_0>0$ such that $\dot{\phi}\leq -A'$, and 
    \begin{align*}
        \operatorname{tr}_{g_\mathbb{H}}h_\mathbb{H}\leq \frac{4}{3}e^{\frac{C_0}{\Lambda}-A}e^{-t}c_1 \operatorname{tr}_{g_\mathbb{C}}h_\mathbb{C}\leq \frac{4}{3}e^{\frac{C_0}{\Lambda}-A}c_0e^{-t}\operatorname{tr}_{g_\mathbb{C}}g^0_\mathbb{C},
    \end{align*}
    for some constant $c_0$, only depends on the initial metric.
    By choosing $\Lambda$ large, such that $1-\frac{C_0}{\Lambda}>\frac{2}{3}$, fixed, when $A$ is large enough, $c_0e^{\frac{C_0}{\Lambda}-A} \leq \frac{1}{\Lambda}$. Thus, choose an $A = \frac{\Lambda}{2}$ large enough, such that $\dot{\phi}(t)\geq -\frac{1}{2}A$, for $t\in[0,T]$, where $T$ is large enough. Then for the first time $\dot{\phi}+\phi +\frac{1}{\Lambda}\phi = -A$, we have:
    \begin{align*}
        (\frac{\partial}{\partial t}-\Delta)(\dot{\phi} + \phi +\frac{1}{\Lambda}\phi)&\geq (\frac{1}{\Lambda } - \frac{C_1e^{-t}}{\Lambda}-c_0e^{\frac{C_0}{\Lambda}-A})e^{-t}\operatorname{tr}_{g_\mathbb{C}}h_\mathbb{C} + 1+ \frac{1}{\Lambda}\dot{\phi}\\
        &>0.
    \end{align*}
    By the maximum principle, the result follows.
\end{proof}
Combine with the potential derivative upper bound (Lemma \ref{l:potential derivative upper bound}), we have the following metric lower bound.
\begin{thm}\label{t:Inoue metric lower bound}
    On an Inoue surface $S_M$, for generalized K\"ahler metric $\omega_0$, there is a constant $C$, such that the normalized pluriclosed flow solution $\omega(t)$with initial data $\gw_0$ will satisfy
    \begin{align*}
        \omega(t)\geq C\omega_h(t),
    \end{align*}
    where $\omega_h(t)$ is the model flow.
\end{thm}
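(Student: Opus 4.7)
The plan is to decompose both $\omega(t)$ and the model flow $\omega_h(t)$ with respect to the $g$-orthogonal splitting $TM = E_{\mathbb H} \oplus E_{\mathbb C}$ (which is respected by any commuting-type GK metric), and estimate each block separately. The $E_{\mathbb H}$ block of $\omega_h(t) = \sqrt{-1}((1-e^{-t})\tfrac{3}{4} + e^{-t})(\operatorname{Im} w)^{-2} dw \wedge d\bar w + \sqrt{-1} e^{-t} \operatorname{Im} w \, dz \wedge d\bar z$ is comparable to $h_{\mathbb H}$ uniformly in $t$ (since the coefficient lies in $[\tfrac34,1]$), so the bound $g_{\mathbb H}(t) \geq C h_{\mathbb H}$ supplied by Corollary \ref{c:lower z} immediately yields $g_{\mathbb H}(t) \geq C' \omega_h(t)|_{E_{\mathbb H}}$. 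The real content is therefore the $E_{\mathbb C}$ direction, where one must prove $g_{\mathbb C}(t) \geq C e^{-t} h_{\mathbb C}$.

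For this, I would exploit the fact that $s=1$ so that $E_{\mathbb H}$ and $E_{\mathbb C}$ are line bundles and the determinant ratios reduce to scalar ratios: $\det g_{\mathbb C}/\det h_{\mathbb C} = (\operatorname{tr}_{g_{\mathbb C}} h_{\mathbb C})^{-1}$ and similarly for the $\mathbb H$-factor. Substituting into the potential equation \eqref{e:potential eq} with the Inoue choice $c(t) = t + \log(3/4)$ from Corollary \ref{c:Inoue C0} gives the pointwise algebraic identity
\begin{align*}
    \log \operatorname{tr}_{g_{\mathbb C}} h_{\mathbb C} \;=\; \log \operatorname{tr}_{g_{\mathbb H}} h_{\mathbb H} \;+\; t \;+\; \log \tfrac{3}{4} \;-\; \dot\phi \;-\; \phi.
\end{align*}
All terms on the right apart from $t$ are uniformly bounded: $|\phi|$ by Corollary \ref{c:Inoue C0}, $|\dot\phi|$ by Lemmas \ref{l:potential derivative upper bound} and \ref{l:potential derivative lower bound}, and $\operatorname{tr}_{g_{\mathbb H}} h_{\mathbb H}$ from above by Corollary \ref{c:lower z} (which in the line-bundle case is equivalent to the lower bound on $g_{\mathbb H}$). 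Hence $\operatorname{tr}_{g_{\mathbb C}} h_{\mathbb C} \leq C e^{t}$, which is exactly $g_{\mathbb C}(t) \geq C^{-1} e^{-t} h_{\mathbb C}$, matching the $E_{\mathbb C}$ component of $\omega_h(t)$.

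Combining the two block estimates across the orthogonal splitting gives $\omega(t) \geq C \omega_h(t)$. The only subtle point in this plan is that it relies crucially on the rank-one nature of each factor: only when $s=1$ does the determinant ratio collapse to the trace and the logarithm linearize neatly, so the combined $C^0$ bounds on $\phi$ and $\dot\phi$ translate directly into a one-sided metric bound. The harder ingredients, namely the two-sided bound on $\dot\phi$ via the maximum principle trick with $\dot\phi + (1+\Lambda^{-1})\phi$ and the lower bound on $g_{\mathbb H}$ via AM--GM on the determinant evolution, have already been established in the preceding lemmas, so no further PDE argument is required at this stage.
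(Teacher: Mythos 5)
Your proposal is correct and follows essentially the same route as the paper: reduce to the line-bundle identity $\dot\phi+\phi = -\log\operatorname{tr}_{g_\mathbb{C}}h_\mathbb{C}+\log\operatorname{tr}_{g_\mathbb{H}}h_\mathbb{H}+t+\log\tfrac34$, invoke the two-sided bounds on $\phi$ and $\dot\phi$ together with Corollary \ref{c:lower z} to get $g_\mathbb{C}\geq C e^{-t}h_\mathbb{C}$, and combine with the $E_\mathbb{H}$ bound across the orthogonal splitting. Your write-up is in fact slightly more explicit than the paper's about why the block-diagonal estimates suffice.
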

\begin{proof}
    By Lemma \ref{l:potential derivative upper bound} and Lemma \ref{l:potential derivative lower bound}, we have that:
    \begin{align*}
        -C\leq\dot{\phi} +\phi \leq C
    \end{align*}
    for some constant $C$. Now, from the equation \eqref{e:potential eq} and $s=1$, we have:
    \begin{align*}
        -C\leq \operatorname{tr}_{g_\mathbb{H}}h_\mathbb{H}/\operatorname{tr}_{g_\mathbb{C}}e^{-t}h_\mathbb{C}\leq C.
    \end{align*}
     Note that in the normalized pluriclosed flow model case, \eqref{e:normalized equation}, the $E_\mathbb{C}$ part will shrink at the rate of $e^{-t}$, while the $E_\mathbb{H}$ part will be equivalent to $h_\mathbb{H}$. Thus, by Corollary \ref{c:lower z}, we have the desired lower bound for $E_\mathbb{C}$.
\end{proof}
\begin{proof} [Proof of Theorem \ref{t:mainthm2}]
Combine Lemma \ref{l:C0 bound}, Corollary \ref{c:imporved C0}, Lemma \ref{l:potential derivative upper bound}, Corollary \ref{c:Inoue C0}, Lemma \ref{l:potential derivative lower bound}, and Theorem \ref{t:Inoue metric lower bound}, the results follow.
\end{proof}

\section{Closing remarks}

As pluriclosed flow is in particular a solution to generalized Ricci flow \cite{PCFReg}, there are scalar curvature monotonicity formulas as explained in \cite{Streetsscalar}.  The key extra input is a solution of the \emph{dilaton flow}
\begin{align*}
    \dt \psi = \gD \psi + \tfrac{1}{6} \brs{H}^2.
\end{align*}
To state the  monotonicity, we recall the notation of weighted Ricci and scalar curvatures:
\begin{align*}
&\operatorname{Ric}^{H,\psi}=\operatorname{Ric}-\frac{1}{4}H^2 + \operatorname{Hess}\psi- \frac{1}{2}(d_g^* H + i_{\nabla \psi}H)\\
    &R^{H,\psi}=R-\frac{1}{12}|H|^2 + 2\Delta \psi -|\nabla \psi|^2
\end{align*}
Along the \emph{normalized} generalized Ricci flow we then obtain 
\begin{cor}(cf. \cite{Streetsscalar} Prop 1.1) If $(g_t, H_t)$ is a solution of normalized generalized Ricci flow and $\psi$ is a solution of the dilaton flow, we have
\begin{align*}
    \left(\dt - \Delta \right) R^{H,\psi} = 2 |\operatorname{Ric}^{H,\psi}|^2 + R^{H,\psi},
\end{align*}
and the estimate:
\begin{align*}
    R^{H,\psi} \geq \inf_{M\times\{0\}}R^{H,\psi}e^{t}.
\end{align*}
\end{cor}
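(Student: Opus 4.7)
The plan is to reduce to the unnormalized case. Proposition 1.1 of \cite{Streetsscalar} states that for a solution of the \emph{unnormalized} generalized Ricci flow coupled to the dilaton flow one has
\[
    (\dt - \Delta) R^{H,\psi} = 2|\operatorname{Ric}^{H,\psi}|^2.
\]
The task is then to transport this identity through the Type-III rescaling that relates the normalized and unnormalized flows.

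Writing $g(t) = e^{-t}\til g(s)$, $H(t) = e^{-t}\til H(s)$, $\psi(t) = \til\psi(s)$ with $s = e^t - 1$, where $(\til g, \til H, \til\psi)$ solves the unnormalized system, a short homogeneity check shows that under the combined scaling $g \mapsto \lambda g$, $H \mapsto \lambda H$, $\psi \mapsto \psi$ each summand of $\operatorname{Ric}^{H,\psi}$ --- namely the Ricci tensor, $-\tfrac{1}{4}H^2$, $\operatorname{Hess}\psi$, and $-\tfrac{1}{2}(d_g^*H + i_{\nabla\psi}H)$ --- is invariant, so $R^{H,\psi}$ is homogeneous of degree $-1$ and $|\operatorname{Ric}^{H,\psi}|^2$ of degree $-2$. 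Applying the chain rule with $ds/dt = s+1 = e^t$, and converting using $\Delta_{\til g} = e^{-t}\Delta_g$ together with $|\operatorname{Ric}^{H,\psi}(\til g,\til H,\til\psi)|^2 = e^{-2t}|\operatorname{Ric}^{H,\psi}(g,H,\psi)|^2$, the powers of $e^t$ line up to produce a single additional $+R^{H,\psi}$ on the right-hand side, yielding the claimed evolution equation.

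For the exponential lower bound, drop the nonnegative term $2|\operatorname{Ric}^{H,\psi}|^2$ to obtain the differential inequality $(\dt - \Delta)R^{H,\psi} \geq R^{H,\psi}$, and compare with the spatially constant barrier $u(t) = e^t\inf_{M \times \{0\}} R^{H,\psi}$, which satisfies $(\dt - \Delta)u = u$ with equality and matches $R^{H,\psi}$ at $t = 0$ at the infimum; the parabolic maximum principle then gives the bound. The main obstacle is the bookkeeping of the rescaling: one must identify the correct combined scaling of $g$ and $H$ (rather than $g$ alone) under which $R^{H,\psi}$ is of a single homogeneity degree, so that the scaling correction contributes the clean term $+R^{H,\psi}$ and no cross-terms of a different type. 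Once this alignment is fixed, both the identity and the estimate follow as above.
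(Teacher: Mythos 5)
Your argument is correct and is exactly the intended derivation: the paper offers no proof beyond the citation of \cite{Streetsscalar} Prop.\ 1.1, and the standard route is precisely to apply that unnormalized identity and transport it through the Type-III rescaling $g\mapsto\lambda g$, $H\mapsto\lambda H$, $s=e^t-1$, under which $\operatorname{Ric}^{H,\psi}$ has degree $0$ and $R^{H,\psi}$ degree $-1$, producing the extra $+R^{H,\psi}$. The maximum-principle step (equivalently, observing that $e^{-t}R^{H,\psi}$ is a supersolution of the heat equation) is likewise the standard conclusion.
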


For a solution to pluriclosed flow there is a natural class of solutions to the dilaton flow, namely the induced metrics on flat line bundles (cf. 
\cite{StreetsPCFBI} Lemma 6.1).  On Inoue surfaces for instance there is a natural such bundle $\det E_{\mathbb H} \otimes (\det E_{\mathbb C})^{\otimes 2}$.  In particular by direct computations it can be shown that
\begin{align*}
    \psi = \frac{1}{9} \left(\operatorname{log}\frac{\det g_\mathbb{H}}{\det h_\mathbb{H}}+2\log\frac{\det g_\mathbb{C}}{\det e^{-t}h_\mathbb{C}} \right)
\end{align*}
is a solution of the dilaton flow after appropriate normalization and gauge-fixing.   In particular,  nonnegativity of $R^{H,\psi}$ is preserved.  Noting that in context now all the data defining $R^{H,\psi}$ is canonically determined by a pluriclosed metric alone, it will be interesting to ask whether $X(K,U)$ admits a metric such that $R^{H,\psi} \geq 0$, which becomes an elliptic problem. 
\begin{conj}
    On an OT manifold $X(K,U)$ there exists no pluriclosed metric with $R^{H,\psi}\geq0$.
\end{conj}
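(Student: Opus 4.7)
The plan is to argue by contradiction using the scalar curvature monotonicity stated in the corollary above.  Suppose $\gw_0$ is a pluriclosed metric on $X(K,U)$ with canonical dilaton $\psi_0$ induced from the flat line bundle $\det E_{\mathbb H}\otimes(\det E_{\mathbb C})^{\otimes 2}$, and assume $R^{H,\psi_0}\geq 0$ everywhere on $M$.  Running the normalized pluriclosed flow with initial data $\gw_0$, Theorem \ref{t:mainthm1} yields a solution on $[0,\infty)$, while the canonical formula for $\psi(t)$ displayed above provides a solution of the dilaton flow along this trajectory with $\psi(0)=\psi_0$.  The corollary then gives
\[
R^{H,\psi}(t)\geq e^{t}\inf_M R^{H,\psi}(0)\qquad\text{for all } t\geq 0.
\]

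The first step is to check the conjecture on the model metric via direct computation: using Proposition \ref{p:modelcurvature} and the explicit form of the Bismut torsion of $\gw_h$ (for which the canonical dilaton vanishes at $t=0$), one evaluates $R^{H,\psi}(\gw_h)$ and confirms it is a negative constant depending only on $s$.  To extend this to arbitrary pluriclosed $\gw_0$, the next and key step is to establish a uniform upper bound $R^{H,\psi}(t)\leq C$ along the normalized flow.  This should draw on the estimates of Theorem \ref{t:mainthm2}: the $C^0$ bound on $\phi$, the bounds on $\dot\phi$, and the metric equivalence from Theorem \ref{t:mainthm1}, combined with the explicit expression for $\psi$ as $\tfrac{1}{9}\bigl(\log\det g_{\mathbb H}/\det h_{\mathbb H}+2\log\det g_{\mathbb C}/\det e^{-t}h_{\mathbb C}\bigr)$, which should control all the second-derivative terms appearing in $R^{H,\psi}$.

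Granted such an upper bound, the contradiction splits into two cases.  If $\inf_M R^{H,\psi}(0)>0$, the lower bound above forces exponential blow-up of $R^{H,\psi}$ as $t\to\infty$, contradicting the uniform upper bound.  If $\inf_M R^{H,\psi}(0)=0$, apply the strong maximum principle to the parabolic inequality $(\dt-\Delta) R^{H,\psi}\geq R^{H,\psi}$: either $R^{H,\psi}$ becomes strictly positive at some $t_0>0$ (which reduces to the previous case by restarting the flow at $t_0$), or $R^{H,\psi}\equiv 0$ on $M\times[0,\infty)$.  In this last situation the evolution equation further forces $\operatorname{Ric}^{H,\psi}\equiv 0$, making $(\gw_0,\psi_0)$ a steady generalized Ricci soliton with vanishing Bakry-\'Emery Ricci tensor; standard rigidity of such structures, combined with the non-K\"ahler nature of $X(K,U)$ and the explicit action of $U\ltimes\mathcal{O}_K$ on $\mathbb H^s\times\mathbb C^s$, should rule this out.

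The main obstacle is the uniform upper bound on $R^{H,\psi}$ along the flow for general initial data.  This is closely related to the Type III curvature estimate for pluriclosed flow on OT manifolds, which as noted after Conjecture \ref{c:OT_PCF} remains open even on Inoue surfaces.  The hope is that only a one-sided bound on the weighted scalar is required, which the decomposition \eqref{e:bismut ricci 1,1 decomposition} and the transgression formula \eqref{e:Bismut difference}, together with the a priori estimates of Section 4, make potentially more tractable than the full curvature bound; this is where the bulk of the analytic work is expected to lie.
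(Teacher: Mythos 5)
The statement you are trying to prove is labeled a \emph{conjecture} in the paper and is left open there --- the authors offer no proof, only the observation that it is an elliptic problem analogous to Albanese's nonexistence result for positive scalar curvature on Inoue surfaces. So there is nothing in the paper to compare your argument against; the question is only whether your strategy closes the problem, and it does not. The decisive gap is the one you yourself flag: the uniform upper bound on $R^{H,\psi}$ along the normalized flow for \emph{arbitrary} pluriclosed initial data. Everything in your contradiction hinges on this, and it is essentially equivalent to the Type III curvature estimate that the paper explicitly states is open even for Inoue surfaces ($s=1$). Moreover, the a priori estimates you propose to draw on (the $C^0$ and $\dot\phi$ bounds of Theorem \ref{t:mainthm2}) are proved only in the generalized K\"ahler setting, where the metric splits and the flow reduces to a scalar equation; the conjecture concerns arbitrary pluriclosed metrics, for which none of those estimates are available and for which even the well-definedness and gauge-fixing of the dilaton $\psi$ requires more care than the paper's formula (stated only for the GK/Inoue case) provides.

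There are two further soft spots. First, even granting an upper bound, a one-sided bound on the \emph{weighted} scalar $R^{H,\psi}$ involves $\Delta\psi$ and $|\nabla\psi|^2$, which are second- and first-order quantities in the metric; controlling these from the $C^0$-level potential estimates of Section 4 is itself a Calabi-type estimate that is not in the paper. Second, your borderline case $\inf_M R^{H,\psi}(0)=0$ terminates in ``$\operatorname{Ric}^{H,\psi}\equiv 0$, hence a steady soliton, hence ruled out by standard rigidity'' --- but no such rigidity statement for Bismut-Ricci-flat-with-dilaton structures on OT manifolds is established or cited, and this would need a genuine argument (e.g.\ via the known topology of $X(K,U)$ or a Bochner-type identity). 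The overall flow-based obstruction strategy is reasonable and mirrors how Ricci flow is used to obstruct positive scalar curvature, but as written the proposal reduces one open problem to another open problem plus an unproven rigidity statement, rather than proving the conjecture.
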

\noindent This question bears a resemblence to a result of Albanese \cite{AlbYambeInoue} showing that the Inoue surface $S_M$ does not have any positive scalar curvature metric.


\end{document}